\newcommand{\headingstyle}[1]{\noindent\textbf{#1}}
\newcommand{\ds}{\displaystyle}
\newcommand{\sub}{\Yboxdim4pt}
\newcommand{\CC}{\mathbb{C}}
\newcommand{\QQ}{\mathbb{Q}}
\newcommand{\RR}{\mathbb{R}}
\newcommand{\ZZ}{\mathbb{Z}}
\DeclareMathOperator{\codim}{codim}
\DeclareMathOperator{\lcm}{lcm}
\DeclareMathOperator{\order}{order}
\newdimen\CellWidth
\newdimen\CellHeight
\newdimen\LineThickness
\def\CharacterCell(#1){%
	\hbox to\CellWidth{\vrule width\LineThickness\hfil%
		\vbox to\CellHeight{\vfil\hbox{#1}\vfil}\hfil}}
\def\TableauRowRecursive(#1,#2){%
	\hbox{\if#2,\vbox{\CharacterCell(#1)}%
		\else\vbox{\CharacterCell(#1)}\vbox{\TableauRowRecursive(#2)}\fi}}
\def\TableauRow(#1){%
	\hbox{\vbox{\TableauRowRecursive(#1,,) %
			\hrule height \LineThickness depth 0.0pt}%
		\vrule width \LineThickness}}
\def\TableauRecursive[#1;#2]{%
	\vbox{\if#2;\TableauRow(#1)%
		\else\TableauRow(#1)\hbox{\TableauRecursive[#2]}\fi}}
\newtheorem{theorem}{Theorem}[section]
\newtheorem{corollary}[theorem]{Corollary}
\newtheorem{lemma}[theorem]{Lemma}
\theoremstyle{definition}
\newtheorem{definition}[theorem]{Definition}
\newtheorem{example}[theorem]{Example}
\theoremstyle{remark}
\newtheorem{remark}[theorem]{Remark}
\numberwithin{equation}{section}
\numberwithin{table}{section}
\begin{document}

\title[Spectra complex reflection groups]
{Spectra of Cayley graphs
	of complex reflection groups}
\author{Briana Foster-Greenwood}
\address{Department of Mathematics \\
	Idaho State University \\
	Pocatello, ID 83209-8085}
\email{fostbria@isu.edu}
\author{Cathy Kriloff}
\address{Department of Mathematics \\
	Idaho State University \\
	Pocatello, ID 83209-8085}
\email{krilcath@isu.edu}
\subjclass[2010]{Primary 05C50; Secondary 20F55, 05C25}
\keywords{Cayley graph, distance spectrum, reflection group}

\begin{abstract}
Renteln proved that the eigenvalues of the distance matrix of a Cayley graph of a real reflection group with respect to the set of all reflections are integral and provided a combinatorial formula for some such spectra.
We prove the eigenvalues of the distance, adjacency, and codimension matrices of Cayley graphs of complex reflection groups with connection sets consisting of all reflections are integral and provide a combinatorial formula for the codimension spectra for a family of monomial complex reflection groups.
\end{abstract}
        

\maketitle
\section{Introduction}
\label{sec:intro}
A Cayley graph of a finite group $G$ has vertices given by the elements in $G$ and an edge joining $g, h \in G$ whenever $hg^{-1}$ is in a specified set $T\subseteq G\setminus\{1\}$ generating $G$ and closed under inversion.\footnote{This defines a left Cayley graph and the conditions on $T$ imply it has no loops, is connected, and is undirected.}
For a complex-valued function $f$ on $G$ and an ordering of the elements of $G$, one can define the matrix with rows and columns indexed by group elements and $(g,h)$-entry $f(gh^{-1})$.
When $f$ is the indicator function of $T$ this yields the adjacency matrix of the Cayley graph,
and when $f$ is the length function with respect to $T$ this yields the distance matrix that records the lengths of shortest paths between vertices.

A real (respectively complex) reflection group is a finite group generated by reflections of a Euclidean (respectively unitary) vector space.
A reflection group is irreducible if that reflection representation is irreducible.
The classifications of real and complex irreducible reflection groups 
consist of one or more infinite families and finitely many exceptional cases.
In~\cite{Renteln11} it is shown that the spectrum of the distance matrix of a Cayley graph of an irreducible real reflection group with $T$ the set of all reflections is integral.
Our main result, Theorem~\ref{th:distintegrality}, extends this to complex reflection groups.

\textbf{Theorem.}
\textit{The spectrum of the distance matrix of a Cayley graph of an irreducible complex reflection group with respect to the set of all reflections is integral.}

The methods used in~\cite{Renteln11} depend on the well-known result that for real reflection groups the reflection length function and the function giving the codimension of the subspace fixed by a particular group element are the same~\cite{Carter72}.
For complex reflection groups these functions can differ~\cite{Shi07a,Shepler-Witherspoon11}, and work of the first author~\cite{Foster-Greenwood13} completes the characterization of when this occurs.
The result in~\cite{Renteln11} also uses that the codimension function is constant on rational conjugacy classes, which is not obvious for the reflection length function.

For the infinite family of complex reflection groups our approach is to consider reflection preserving automorphisms to show that in fact the reflection length function is also constant on rational conjugacy classes.  
This appears in Section~\ref{sec:integrality} following some preliminaries on reflection groups, Cayley graphs, and spectra in Sections~\ref{sec:refgroups}, \ref{sec:Cayleygraphs}, and~\ref{sec:charformulas}.  
For the exceptional groups we utilize computer calculations in GAP3~\cite{gap3}\nocite{CHEVIE} because analysis of the automorphisms is more subtle in this case and appears to involve computer calculations as well\textemdash see~\cite{Marin-Michel10}.
In Section~\ref{sec:combformula} we review the theory of symmetric functions necessary to provide a combinatorial formula (Theorem~\ref{th:codimspec}) for codimension spectra for a family of the monomial complex reflection groups analogous to that given for distance, equivalently codimension, spectra for real reflection groups in~\cite{Renteln11}.

We were motivated by the natural question, common in representation theory, of whether results known for real reflection groups can be extended to complex reflection groups, often with the goal of better understanding these more general groups.
However, questions related to integral adjacency spectra date back to~\cite{Harary-Schwenk74} and have been explored extensively for trees and other graphs with special vertex properties (see~\cite{BCRSS02}) as well as for Cayley graphs of abelian groups\textemdash in particular for circulant graphs, where the group is cyclic~\cite{Bridges-Mena82,So06,Alperin-Peterson12}\textemdash and for nonabelian groups with nicely behaved connection sets in~\cite{DKMA13,Godsil-Spiga14}.
Two significant classification results regarding groups that have Cayley graphs with integral adjacency spectrum appear in~\cite{ABM14}.
There are also contributions relating adjacency and distance spectra of Cayley graphs of cyclic groups~\cite{Ilic10}, and more generally abelian groups, even for a broad class of connection sets~\cite{Klotz-Sander12}.

\section{Background on reflection groups}
\label{sec:refgroups}

A real reflection group is a finite group generated by orthogonal reflections of a real vector space (see~\cite{Bourbaki02,Humphreys90}).
It can be shown that there is a natural set of generating reflections up to conjugacy, so such a choice is fixed and these are termed \textit{simple reflections}.
There is a classification of irreducible real reflection groups, using diagrams that graphically encode the simple reflections and the orders of their products, into four infinite families and six exceptional groups.
Those groups that are crystallographic, i.e., preserve an integral lattice, are the Weyl groups that arise in Lie theory, so a reflection group is often denoted by $W$ (even when not crystallographic or not real) as we will do here.

The notion of reflection and the classification of finite groups generated by reflections extend to the setting of an $n$-dimensional complex vector space $V$ (for a brief survey, see~\cite{Geck-Malle06} and for more details see~\cite{Lehrer-Taylor09}).
A linear transformation on $V$ is a \textit{reflection} if it is of finite order and has an $(n-1)$-dimensional eigenspace corresponding to eigenvalue $1$.
In the case of a real reflection, the remaining $1$-dimensional space is an eigenspace corresponding to eigenvalue $-1$, but in the remaining complex dimension a complex reflection acts by a root of unity that may have order greater than two.
A finite subgroup $G$ of $\mathrm{GL}(V)$ generated by reflections is called a \textit{reflection group on $V$}.
Since $G$ is finite, the standard averaging technique makes it possible to fix a non-degenerate $G$-invariant hermitian form on $V$ and consider $G$ as a subgroup of the unitary group on $V$.
Finiteness of $G$ also guarantees the representation on $V$ is completely reducible, which means it suffices to consider reflection groups and spaces on which they act irreducibly.
More precisely, $G$ is said to \textit{act irreducibly in dimension $k$} if its fixed point space is of dimension $n-k$ and it acts irreducibly when restricted to the complement of that fixed point space. 

We describe an infinite family of complex reflection groups.
Let $r,p,n \geq 1$ with $p$ dividing $r$, and let $\zeta$ be a primitive $r$-th root of unity.
Under the \textit{standard monomial representation}, the group $G(r,p,n)$ consists of 
\begin{align*}
 &\hbox{monomial matrices with nonzero entries $\zeta^{a_1},\dots, \zeta^{a_n}$ such that} \\
 &\hbox{$(\zeta^{a_1}\cdots \zeta^{a_n})^{r/p}=1$, or equivalently $a_1+\cdots +a_n \equiv 0 \bmod{p}$.}
\end{align*}
Each such monomial matrix may be written as a product of a diagonal matrix with entries $\zeta^{a_1},\dots,\zeta^{a_n}$ and a permutation matrix (obtained by permuting columns of the identity matrix).
Keeping track of only the exponents provides an alternative description of $G(r,p,n)$ as an index $p$ subgroup of $\ZZ_r\wr S_n=\ZZ_r^n \rtimes S_n$ and makes it clear that $|G(r,p,n)|=r^n n!/p$.
In this perspective, $G(r,p,n)$ consists of all
\[(a_1,\dots,a_n \mid \sigma) \hbox{ such that } a_i \in \ZZ_r,\, a_1+\cdots +a_n \equiv 0 \bmod{p}, \hbox{ and } \sigma \in S_n,\]
and the action of $S_n$ on $\ZZ_r^n$ providing the semidirect product structure on $G(r,p,n)$ is
\[\sigma.(a_1,\dots,a_n)=(a_{\sigma^{-1}(1)},\dots,a_{\sigma^{-1}(n)}),\]
so that if $\sigma, \tau \in S_n$,
\[(a_1,\dots,a_n \mid \sigma)(b_1,\dots,b_n \mid \tau)=(a_1+b_{\sigma^{-1}(1)},\dots,a_n+b_{\sigma^{-1}(n)} \mid \sigma\tau).\]
This is consistent with matrix multiplication where $(a_1, a_2, \dots , a_n \mid \sigma)$ represents the $n \times n$ matrix whose only nonzero entries are the $\zeta^{a_i}$ in position $(i,\sigma^{-1}(i))$ for $1\leq i\leq n$. In matrix form, each element of $G(r,p,n)$
is conjugate by a permutation to a direct sum of blocks $\delta_i\sigma_i$,
where $\delta_i$ is diagonal and $\sigma_i$ is a cyclic permutation.
For a block $\delta_i\sigma_i$, define the \textit{cycle-size} to be the order of $\sigma_i$ and
the \textit{cycle-product} to be the determinant of $\delta_i$.
In Section~\ref{sec:integrality} it will be more convenient to work additively, so
for a cycle-product $\det(\delta_i)=\zeta^{c_i}$, define the corresponding \textit{cycle-sum} to be
$c_i \bmod{r}$.
Two elements of $G(r,1,n)$ are conjugate if and only if they have the same
multiset of (cycle-size, cycle-product) pairs. Conjugacy classes may split upon
restriction to the normal subgroup $G(r,p,n)$.

A reflection group $G$ on $V$ is \textit{imprimitive} if there is a decomposition $V=V_1\oplus V_2 \oplus \cdots \oplus V_k$ into proper nonzero subspaces such that $G$ permutes the subspaces.
The groups $G(r,p,n)$ with $r,n\geq 2$ are imprimitive in their action on a subset of the set of lines orthogonal to the reflecting hyperplanes and the exceptional groups are primitive.
The symmetric groups $G(1,1,n)$ do not act irreducibly in the standard monomial representation but do act irreducibly on the complement of the span of the sum of all the basis vectors and are primitive on this $(n-1)$-dimensional subspace. 

The irreducible finite reflection groups were classified in~\cite{Shephard-Todd54} (see also~\cite{Lehrer-Taylor09}) and consist of the
\begin{itemize}
\item groups $G(r,p,n)$, with $r\geq 2$, $n\geq 1$, and $(r,p,n)\neq (2,2,2)$, which are imprimitive and irreducible in dimension $n$, 
\item symmetric groups $G(1,1,n)$, which are primitive and irreducible in dimension $n-1$, and
\item $34$ primitive exceptional groups, numbered $G_4,\dots,G_{37}$, irreducible in dimensions $2$ to $8$.
\end{itemize}

The reason for the numbering is that the original classification listed $G(r,1,1)$, $G(1,1,n)$, and $G(r,p,n)$ with $r,n \geq 2$ separately.
It is common to refer to all $G(r,p,n)$ with $n\geq 1$ and $r>1$ as the monomial groups.
If there is a $G$-invariant real subspace, $V_0$, of $V$ so that the canonical map $\CC \otimes_{\RR} V_0 \to V$ is a bijection, then $G$ is a \textit{real reflection group}.
The finite real reflection groups occur in the classification from~\cite{Shephard-Todd54} as:
\begin{align*}
G(1,1,n), &\hbox{ type } A_{n-1}, \hbox{ the symmetric group } S_n; \\
G(2,1,n), &\hbox{ type } B_n, \hbox{ the hyperoctahedral group } \ZZ_2^n \rtimes S_n; \\
G(2,2,n), &\hbox{ type } D_n, \hbox{ an index two subgroup of } \ZZ_2^n \rtimes S_n; \\
G(r,r,2), &\hbox{ type } I_2(r), \hbox{ the dihedral group of order } 2r; \hbox{ and} \\
G_{23}, G_{28}, G_{30}&, G_{35}, G_{36}, \hbox{ and } G_{37}, \hbox{ types } H_3, F_4, H_4, E_6, E_7, \hbox{ and } E_8 \hbox{ respectively}.
\end{align*}

For general complex reflection groups there is not a choice of generators that is canonical up to conjugacy as with simple reflections in Coxeter groups.  
Commonly used presentations appear in~\cite{BMR98}, matrix forms of generators for the monomial groups are provided in~\cite{Lehrer-Taylor09}, and various other presentations are analyzed in~\cite{Shi07b}. 

\section{Background on Cayley graphs and spectra}
\label{sec:Cayleygraphs}

For a finite group $G$ and a subset $C$ of $G\setminus\{1\}$ with $C=C^{-1}$, 
the (left) \textit{Cayley graph}, $\Gamma(G,C)$, of $G$ with respect to $C$ has vertices
corresponding to the elements $g \in G$ and an edge joining $g$ and $cg$
for each $g \in G$ and $c \in C$.  In particular, $\Gamma(G,C)$ is an undirected
graph with no loops and no multiple edges.
The Cayley graph is always vertex-transitive and regular and is connected when $C$ generates $G$.
If $C$ is closed under conjugation then a corresponding right Cayley graph with an edge joining $g$ and $gc$ is isomorphic to the left Cayley graph since $gc=(gcg^{-1})g$.

\begin{definition}
For a graph $\Gamma=\Gamma(G,C)$ with a choice of ordering of the vertices of $\Gamma$ the \textit{adjacency matrix} $A_\Gamma$ has $(g,h)$-entry 
\[A_\Gamma(g,h)=\hbox{ the number of edges joining vertices $g$ and $h$,}
\]
and the \textit{distance matrix} $D_\Gamma$ has $(g,h)$-entry
\[D_\Gamma(g,h)=\hbox{ the minimal length of a path from $g$ to $h$ in $\Gamma$}. \]
Since $A_{\Gamma}$ and $D_{\Gamma}$ are symmetric, they have real eigenvalues.
If the \textit{spectrum} (set of eigenvalues) of the matrix $A_{\Gamma}$ ($D_{\Gamma}$) consists entirely of integers,
then the graph $\Gamma$ is \textit{adjacency} (\textit{distance}) \textit{integral}.
\end{definition}

Since right multiplication is an automorphism of the left Cayley graph,
\begin{align*}
A_\Gamma(g,h)&=A_\Gamma(gh^{-1},1)=A_\Gamma(1,gh^{-1}), \hbox{ and} \\
D_\Gamma(g,h)&=D_\Gamma(gh^{-1},1)=D_\Gamma(1,gh^{-1}).
\end{align*}

A crucial observation related to the approach in Section~\ref{sec:charformulas} is that 
\begin{equation}
\label{eq:a}
A_\Gamma(1,gh^{-1})=\delta_C(gh^{-1}),  \hbox{ for the indicator function }  
\delta_C(x)=\begin{cases}
     1 & \text{ if $x\in C$}, \\
     0 & \text{ if $x\notin C$}
\end{cases}
\end{equation}
and
\begin{equation}
\label{eq:d}
D_\Gamma(1,gh^{-1})=\ell_C(gh^{-1}),  \hbox{ for }  
\ell_C(x)=\begin{cases}
     \min\{k \mid x=c_1\cdots c_k \text{ for } c_i \in C\} & \text{ if $x\neq 1$}, \\ 
     0 & \text{ if $x=1$}
\end{cases}.
\end{equation}

Other commonly studied matrices associated to a graph are the Laplacian matrix, $L_\Gamma^-=V_\Gamma-A_\Gamma$, and signless Laplacian, $L_\Gamma^+=V_\Gamma+A_\Gamma$, where $V_\Gamma$ is diagonal with $(u,u)$-entry $V_\Gamma(u,u)=$ the degree of vertex $u$. 
Since $\Gamma(G,C)$ is regular of degree $d=|C|$, then $L_\Gamma^\pm=dI\pm A_\Gamma$ and the spectrum of $L_\Gamma^\pm$ is just a translation of the spectrum of $A_\Gamma$.  
Similarly, the normalized Laplace and Seidel spectra are easily related to the spectrum of $A_\Gamma$ when $\Gamma$ is regular. 
Hence we do not consider any Laplace or Seidel spectra here.

Our focus is on Cayley graphs $\Gamma(W,T)$ where 
\begin{center}
$W$ is a complex reflection group and $T$ is the set of all reflections of $W$.
\end{center}
Note that $T$ generates $G$ and is closed under conjugation. 
Consequently, the left Cayley graph $\Gamma(W,T)$ is connected and isomorphic to a corresponding right Cayley graph.
We often abbreviate $\Gamma(W,T)$ as $\Gamma$.
The function $\ell_T$ has been called the \textit{reflection length} or \textit{absolute length} in~\cite{Shi07a,Armstrong09,Shepler-Witherspoon11} and is one example of word length considered in geometric group theory.  
Notice this is not the length function with respect to simple reflections more commonly used in Lie theory.

When $W$ is a real reflection group, the Cayley graph is the underlying graph for a partial order defined using $\ell_T$, called the absolute or reflection order.
When $W$ is a complex reflection group, this need no longer be the case.  
We briefly describe the partial order in order to emphasize the distinction.
The \textit{absolute} or \textit{reflection order} is defined by 
\[ u\leq_T w \hbox{ if and only if } \ell_T(u)+\ell_T(u^{-1}w)=\ell_T(w).\]
We denote by $(W,\leq_T)$ the reflection order poset and use $u \lessdot_T w$ to mean that $u<_T w$ and there does not exist $v \in W$ with $u <_T v <_T w$.

If $u\lessdot_{T} w$ is a covering relation in the poset $(W,\leq_{T})$, then
one can show $u^{-1}w$ must be a reflection.  The conjugate element
$wu^{-1}$ is also a reflection, so $(u,w)=(u,wu^{-1}u)$ is an edge
in the left Cayley graph $\Gamma(W,T)$.
Thus the poset $(W,\leq_T)$ is a subgraph of $\Gamma(W,T)$.
But $\Gamma(W,T)$ may contain additional edges as the next two examples illustrate.

\begin{example}
	Consider any complex reflection group $W$ in which there is a reflection $t$ of order three.  
	The Cayley graph $\Gamma(W,T)$ contains an edge joining the reflection $t$ to $t^2$,
	but $t^2$ is also a reflection, so $\ell_T(t)=\ell_T(t^2)=1$, and consequently, 
	the elements $t$ and $t^2$ are not comparable in the reflection length poset $(W, \leq_T)$.
\end{example}

\begin{example}
Consider the cyclic group of order $r$ as a complex reflection group, $G(r,1,1)$. 
Every $w\in G(r,1,1)$ with $w\neq 1$ is a reflection, so $\Gamma(G(r,1,1),T)$ is the complete graph $K_r$, which has $D_{K_r}=A_{K_r}$.  
It is well-known and easily checked that the spectrum is $\{r-1, -1^{r-1}\}$, where the power indicates multiplicity.  
In contrast, the graph of the reflection order poset $(G(r,1,1),\leq_T)$ is the $r$-star, $S_r$, which also has $D_{S_r}=A_{S_r}$, 
but with spectrum $\{-2^{r-2}, r-2\pm\sqrt{r^2-3r+3}\}$ (see~\cite[Proposition~9]{Trinajstic83}).
Note that $G(r,p,1)\simeq G(r/p,1,1)$ is included here.
\end{example}

The following theorem characterizes when the Cayley graph and reflection length poset coincide.  
The equivalence of (1) and (3) essentially appears in~\cite{Armstrong09} in the comments in the paragraph following Example~2.3.3 and the end of the paragraph following Definition~2.4.4.

\begin{theorem} Let $W$ be a complex reflection group.  The following are equivalent.
\label{th:graphvsposet}
\begin{enumerate}
\item The Cayley graph $\Gamma(W,T)$ is the underlying graph of the poset $(W,\leq_T)$.
\item The Cayley graph $\Gamma(W,T)$ is bipartite.
\item Every reflection in $W$ is of order $2$. 
\end{enumerate}
\end{theorem}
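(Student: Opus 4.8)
The plan is to establish the two equivalences $(1)\Leftrightarrow(3)$ and $(2)\Leftrightarrow(3)$ separately, relying only on two soft facts about reflection length: the triangle inequality $\ell_T(xy)\le\ell_T(x)+\ell_T(y)$ (concatenate reduced expressions, using that the inverse of a reflection is a reflection, so $\ell_T(x)=\ell_T(x^{-1})$), and the determinant homomorphism $\det\colon W\to\CC^{*}$. No use will be made of any identity between $\ell_T$ and the codimension of the fixed space, since that identity can fail for complex $W$ and is exactly what this part of the paper aims to avoid.

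For $(3)\Rightarrow(1)$, recall that the Hasse graph of $(W,\le_T)$ is already a subgraph of $\Gamma(W,T)$, so it suffices to show conversely that every edge of $\Gamma(W,T)$ is a covering relation. Such an edge joins $u$ and $w$ with $wu^{-1}\in T$, equivalently $u^{-1}w=t\in T$ because $T$ is conjugation-closed. Under $(3)$ every $t\in T$ has $\det t=-1$, so $\det g=(-1)^{\ell_T(g)}$ for all $g\in W$; hence $(-1)^{\ell_T(w)-\ell_T(u)}=\det(u^{-1}w)=-1$, which forces $\ell_T(u)\ne\ell_T(w)$, while the triangle inequality gives $|\ell_T(u)-\ell_T(w)|\le\ell_T(u^{-1}w)=1$. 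So $|\ell_T(u)-\ell_T(w)|=1$, say $\ell_T(w)=\ell_T(u)+1$; then $\ell_T(u)+\ell_T(u^{-1}w)=\ell_T(w)$ yields $u<_T w$, and since $u<_T v<_T w$ would force $\ell_T(u)<\ell_T(v)<\ell_T(w)$ (impossible for consecutive integers), in fact $u\lessdot_T w$. For $(3)\Rightarrow(2)$, the map $g\mapsto\det g\in\{\pm1\}$, well defined into $\{\pm1\}$ because $W$ is generated by reflections of determinant $-1$, is a proper $2$-coloring of the connected graph $\Gamma(W,T)$ since every edge flips the sign of $\det$; hence $\Gamma(W,T)$ is bipartite.

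For both converses, suppose $(3)$ fails, so some reflection $t$ has order $m\ge3$. Each power $t^k$ with $1\le k\le m-1$ is again a reflection, its only eigenvalue other than $1$ being $\zeta^k\ne1$, where $\zeta$ is the primitive $m$-th root of unity that is the nontrivial eigenvalue of $t$; in particular $1$, $t$, $t^2$ are three distinct vertices, and since $t$ and $t^2$ lie in $T$ (and $t^2t^{-1}=t\in T$) they form a triangle in $\Gamma(W,T)$. A triangle immediately contradicts $(2)$. It also contradicts $(1)$, because the Hasse diagram of any poset is triangle-free (three mutually cover-related elements are pairwise comparable, hence a chain $a<b<c$, contradicting the cover $a\lessdot c$), so a graph containing a triangle cannot be the Hasse graph of $(W,\le_T)$; alternatively, $\ell_T(t)=\ell_T(t^2)=1$ whereas any cover $a\lessdot_T b$ satisfies $\ell_T(a)<\ell_T(b)$, so the edge $\{t,t^2\}$ of $\Gamma(W,T)$ is not a covering relation.

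I do not expect a serious obstacle. The one point needing care is excluding the possibility $\ell_T(u)=\ell_T(w)$ in $(3)\Rightarrow(1)$ (a priori an edge of a Cayley graph need not join elements of different reflection length), which the parity of $\det$ resolves; once the lengths are known to differ by exactly $1$, the comparability $u<_T w$ is automatically a covering relation.
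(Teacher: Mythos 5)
Your proof is correct, and its two substantive ingredients\textemdash the determinant parity argument when all reflections have order $2$, and the triangle $1,t,t^2$ produced by a reflection of order $\geq 3$\textemdash are exactly the ones the paper uses. The difference is organizational and in what gets cited. The paper proves the cycle $(1)\Rightarrow(2)\Rightarrow(3)\Rightarrow(1)$, and its step $(1)\Rightarrow(2)$ leans on the external fact (quoted from Armstrong) that $(W,\leq_T)$ is a graded poset with rank function $\ell_T$, so that parity of rank gives the bipartition; its step $(3)\Rightarrow(1)$ runs by contradiction, extracting $\ell_T(w)=\ell_T(wt)$ from a non-comparable edge and then contradicting it with $\det$. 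You instead make $(3)$ the hub, proving $(1)\Leftrightarrow(3)$ and $(2)\Leftrightarrow(3)$ separately: the $2$-coloring by $\det\in\{\pm1\}$ handles $(3)\Rightarrow(2)$ with no appeal to gradedness, and your $(3)\Rightarrow(1)$ is the direct (non-contradiction) form of the paper's determinant step, pinning $|\ell_T(u)-\ell_T(w)|=1$ and then reading off the cover. This buys a self-contained argument resting only on the triangle inequality for $\ell_T$ and the determinant homomorphism, at the cost of proving four implications instead of three; both you and the paper equally gloss over the standard fact that a covering relation $u\lessdot_T w$ forces $u^{-1}w\in T$, which is needed to know the Hasse graph sits inside $\Gamma(W,T)$ in the first place.
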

\begin{proof}
First assume $\Gamma(W,T)$ is the underlying graph of $(W,\leq_T)$.  
As mentioned after Definition~2.4 in~\cite{Armstrong09}, $(W,\leq_T)$ is a graded poset with $\ell_T$ as its rank function.  
Thus the vertices can be partitioned into those of even and odd rank, with none of common parity adjacent to each other.

Second assume that $\Gamma(W,T)$ is bipartite and let $t\in T$ with order $m>2$.  
Then $1$, $t$, and $t^2$ form a cycle of length $3$, contradicting that $\Gamma(W,T)$ has no cycles of odd length.  Thus every $t\in T$ has order $2$.

Third assume every reflection in $W$ is of order $2$ and that $\Gamma(W,T)$ is not the underlying graph of $(W,\leq_T)$.  
Then there is an edge $\{w,wt\}$ with $w$ and $wt$ not comparable in the reflection order.  This forces
\begin{align*}
\ell_T(w)+\ell_T(w^{-1}wt)&=\ell_T(w)+1>\ell_T(wt) \hbox{ and} \\
\ell_T(wt)+\ell_T(t^{-1}w^{-1}w)&=\ell_T(wt)+1>\ell_T(w).
\end{align*}
Hence $1>|\ell_T(w)-\ell_T(wt)|$, and since the right side is a nonnegative integer, $\ell_T(w)=\ell_T(wt)$.
Let $w=t_1\cdots t_k$ be a reduced expression for $w$.  
Each reflection $t_i$ is of order $2$, so in the geometric realization of $W$, $\det(t_i)=-1$ for $1\leq i\leq k$.  
But then $\det(w)=(-1)^k$ while $\det(wt)=(-1)^{k+1}$.
This contradiction proves that $\Gamma(W,T)$ and$(W,\leq_T)$ are the same graphs.
\end{proof}

There is a spectral criterion equivalent to the conditions in Theorem~\ref{th:graphvsposet}.
It is well-known and easy to prove that a graph is bipartite if and only if its adjacency spectrum is symmetric with respect to zero~\cite[Theorem~3.2.3]{CRS10}, and in particular a connected graph is bipartite if and only if its largest adjacency eigenvalue is the negative of its smallest adjacency eigenvalue~\cite[Theorem~3.2.4]{CRS10}.  

The first part of~\cite{Renteln11} shows that for $W$ a real reflection group, $\Gamma(W,T)$, or equivalently $(W,\leq_T)$, is distance integral and provides formulas for the distance spectrum 
in terms of the irreducible characters of $W$.
We do not consider spectra of order posets further here because, by Theorem~\ref{th:graphvsposet},
$\Gamma(W,T)$ is not the underlying graph of $(W,\leq_T)$ for many complex reflection groups,
and when $W=G(3,1,2)$ or $W=G_4$ for example, hand and computer calculations indicate that
the distance spectrum of $(W,\leq_T)$ is non-integral.

The second part of~\cite{Renteln11} considers the Cayley graph $\Gamma(W,S)$,
where $W$ is a real reflection group and $S$ is the set of simple reflections,
and shows that $\Gamma(W,S)$ is distance integral when $W$ is of type $A$, $D$, or $E$ (i.e., a simply-laced reflection group). 
For each complex reflection group, there is a set $S$ of generators given in the standard references~\cite{BMR98,Lehrer-Taylor09}, but these are not as well-behaved as the set of simple reflections for the real groups\textemdash see~\cite{Bremke-Malle98,Shi02}.
For such $S$ the graph $\Gamma(W,S)$ does not appear to have integral adjacency or distance spectra in general.  
We used a mixture of symbolic and numerical computer calculations in Sage~\cite{sage} calling GAP3~\cite{gap3} and Mathematica to compute these spectra for several $\Gamma(W,S)$.
These calculations indicate that for the $19$ exceptional complex reflection groups with order less than $1000$, the adjacency and distance spectra of $\Gamma(W,S)$ are non-integral.
They also indicate that for twenty relatively small groups of the form $G(r,p,n)$ the adjacency and distance spectra are non-integral (i.e., contain some non-integers) in all cases except $G(2,2,n)$ (which are real of type $D_n$ and already treated in~\cite{Renteln11}) and $G(6,3,2)$ (of order $24$ but not isomorphic to $G(2,2,3)$).  
For these reasons we do not consider the spectra of the graphs $\Gamma(W,S)$ further here.

\section{Character formulas and integrality}
\label{sec:charformulas}

In this section we review what is known regarding character formulas and integrality in a general context and include an aside on the spectral radius.

\headingstyle{Character Formulas}\\
The following definition and lemma appear in~\cite{Diaconis-Shahshahani81}.  
Similar perspectives that apply to any finite group are explained in~\cite{Cesi09} and~\cite[Section~11.1]{HLW06}.
See also~\cite{Babai79}.
\begin{definition}
For a finite group $G$, a function $f:G\to \CC$, and a representation $\rho:G\to \mathrm{GL}(V)$, the \textit{Fourier transform of $f$ at $\rho$} is the linear transformation $\hat{f}(\rho):V\to V$ defined by
\[\hat{f}(\rho)=\sum\limits_{g\in G} f(g)\rho(g).\]
\end{definition}
Let $\rho_{\mathrm{reg}}$ be the left regular representation of $G$ extended linearly to $\CC G$.  Then
\[\hat{f}(\rho_{\mathrm{reg}})(h)=\sum\limits_{k\in G} f(k)\rho_{\mathrm{reg}}(k)h=\sum\limits_{k\in G} f(k) kh=\sum\limits_{g\in G}f(gh^{-1})g,\]
and thus the action of $\hat{f}(\rho_{\mathrm{reg}})$ is given by the matrix $M_f$ with $(g,h)$-entry $f(gh^{-1})$, i.e., with 
\[(M_f)_{g,h}=f(gh^{-1}).\]
By the observations~(\ref{eq:a}) and~(\ref{eq:d}), the adjacency matrix $A_\Gamma$ represents the action of $\hat{\delta}_T(\rho_{\mathrm{reg}})$, and the distance matrix $D_\Gamma$ represents the action of $\hat{\ell}_T(\rho_{\mathrm{reg}})$.

Recall that $f:G\to \CC$ is a \textit{class function} if it is constant on the conjugacy classes of $G$.

\begin{lemma}
\label{le:SchurCor}
Let $\rho:G\to\mathrm{GL}(V)$ be an irreducible representation of $G$ with character $\chi_\rho$.
Let $f:G\to \CC$ be a class function.  Then $\hat{f}(\rho)=cI$ is a scalar multiple of the identity map with $c=\dfrac{1}{\chi_\rho(1)}\sum\limits_{g\in G} f(g)\chi_\rho(g)$.
\end{lemma}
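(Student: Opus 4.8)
The plan is the classical Schur's lemma argument. First I would verify that $\hat{f}(\rho)$ is an intertwiner for $\rho$, i.e., that it commutes with $\rho(h)$ for every $h\in G$. Conjugating the defining sum and reindexing by $g'=hgh^{-1}$,
\[
\rho(h)\,\hat{f}(\rho)\,\rho(h)^{-1} = \sum_{g\in G} f(g)\,\rho(h)\rho(g)\rho(h)^{-1} = \sum_{g\in G} f(g)\,\rho(hgh^{-1}) = \sum_{g'\in G} f(h^{-1}g'h)\,\rho(g'),
\]
and because $f$ is a class function we have $f(h^{-1}g'h)=f(g')$, so the right-hand side is exactly $\hat{f}(\rho)$. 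Hence $\rho(h)\hat{f}(\rho)=\hat{f}(\rho)\rho(h)$ for all $h\in G$.

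Next I would invoke Schur's lemma. Since $\rho$ is an irreducible representation over $\CC$, and $\CC$ is algebraically closed, every linear transformation of $V$ commuting with all of the $\rho(h)$ is a scalar multiple of the identity. Applying this to $\hat{f}(\rho)$ yields $\hat{f}(\rho)=cI$ for some $c\in\CC$. To identify $c$, take traces: on one hand $\operatorname{tr}\bigl(\hat{f}(\rho)\bigr)=\sum_{g\in G} f(g)\operatorname{tr}\bigl(\rho(g)\bigr)=\sum_{g\in G} f(g)\chi_\rho(g)$, and on the other hand $\operatorname{tr}(cI)=c\dim V=c\,\chi_\rho(1)$. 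Equating the two and dividing by $\chi_\rho(1)=\dim V\neq 0$ gives $c=\tfrac{1}{\chi_\rho(1)}\sum_{g\in G} f(g)\chi_\rho(g)$, as claimed.

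There is essentially no obstacle here; the argument is entirely routine. The only points meriting any care are that the scalar form of Schur's lemma requires the ground field to be algebraically closed (hence the hypothesis that $\rho$ is irreducible over $\CC$), and that the division producing $c$ is legitimate because $\chi_\rho(1)=\dim V$ is a positive integer.
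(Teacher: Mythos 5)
Your proof is correct and follows essentially the same route as the paper: show $\hat{f}(\rho)$ commutes with all $\rho(h)$ via the class-function reindexing, apply the scalar form of Schur's lemma over $\CC$, and identify the scalar by taking traces. The only cosmetic difference is the direction of conjugation, which is immaterial.
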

\begin{proof}
This is a corollary of Schur's Lemma.
Note $\hat{f}(\rho)$ commutes with the action of $W$ on $V$, as
\begin{align*}
\rho(h)^{-1}\hat{f}(\rho)\rho(h)&=\rho(h^{-1})\sum\limits_{k\in G}f(k)\rho(k)\rho(h) \\
&=\sum\limits_{k\in G}f(k)\rho(h^{-1}kh) \\
&=\sum\limits_{g\in G}f(hgh^{-1})\rho(g) \\
&=\sum\limits_{g\in G}f(g)\rho(g)=\hat{f}(\rho).
\end{align*}
Hence $\hat{f}(\rho)$ satisfies the hypothesis of Schur's Lemma so $\hat{f}(\rho)=cI$ and taking traces yields that $c \cdot \chi_\rho(1)=\sum\limits_{g\in G} f(g)\chi_\rho(g)$.
\end{proof}

Combining Lemma~\ref{le:SchurCor} with the well-known decomposition of the regular representation results in a character formula for spectra (see~\cite{Babai79,Diaconis-Shahshahani81}).

\begin{theorem}
\label{th:charformula}
Let $G$ be a finite group and let $f:G\to \CC$ be a class function.
Then the eigenvalues of the matrix $M_f=(f(gh^{-1}))_{g,h\in G}$ are
\[\theta_\chi=\dfrac{1}{\chi(1)}\sum\limits_{g\in G} f(g)\chi(g), \hbox{ with multiplicity } \chi(1)^2,\]
where $\chi$ ranges over the irreducible characters of $G$.
\end{theorem}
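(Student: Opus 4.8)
The plan is to diagonalize the operator that $M_f$ represents by passing to the isotypic decomposition of the regular representation and applying Lemma~\ref{le:SchurCor} on each block. First I would recall the computation made just before Lemma~\ref{le:SchurCor}: in the basis of $\CC G$ indexed by the group elements, the Fourier transform $\hat f(\rho_{\mathrm{reg}})$ is represented by exactly the matrix $M_f$ with $(g,h)$-entry $f(gh^{-1})$. Hence the eigenvalues of $M_f$, counted with multiplicity, are precisely the eigenvalues of the linear operator $\hat f(\rho_{\mathrm{reg}})$ acting on $\CC G$, and since similar matrices have the same characteristic polynomial, any change of basis on $\CC G$ leaves this multiset unchanged. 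So it suffices to understand $\hat f(\rho_{\mathrm{reg}})$ after a convenient change of basis.

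Next I would invoke the standard decomposition of the regular representation, $\rho_{\mathrm{reg}} \cong \bigoplus_{\chi} \rho_\chi^{\oplus \chi(1)}$, where $\chi$ ranges over the irreducible characters of $G$ and $\rho_\chi : G \to \mathrm{GL}(V_\chi)$ is an irreducible representation affording $\chi$ with $\dim_\CC V_\chi = \chi(1)$. The Fourier transform is additive and natural in the representation: $\hat f(\rho \oplus \rho') = \hat f(\rho) \oplus \hat f(\rho')$, and if $\rho \cong \rho'$ via an intertwiner $P$ then $\hat f(\rho') = P \hat f(\rho) P^{-1}$. Consequently $\hat f(\rho_{\mathrm{reg}})$ is conjugate, via the intertwiner realizing the above decomposition, to the block-diagonal operator $\bigoplus_{\chi} \hat f(\rho_\chi)^{\oplus \chi(1)}$.

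Then, because $f$ is a class function and each $\rho_\chi$ is irreducible, Lemma~\ref{le:SchurCor} applies to give $\hat f(\rho_\chi) = \theta_\chi I_{V_\chi}$ with $\theta_\chi = \frac{1}{\chi(1)} \sum_{g \in G} f(g)\chi(g)$. Thus the block $\hat f(\rho_\chi)^{\oplus \chi(1)}$ is scalar multiplication by $\theta_\chi$ on a space of dimension $\chi(1) \cdot \chi(1) = \chi(1)^2$, so $\theta_\chi$ is an eigenvalue of $M_f$ of multiplicity (at least) $\chi(1)^2$. Summing over all irreducible characters produces $\sum_\chi \chi(1)^2 = |G| = \dim_\CC \CC G$ eigenvalues, which is the full count, so these are all of them and the stated multiplicities are exact.

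There is essentially no hard step here; the only point requiring care is the bookkeeping of multiplicities, where two factors of $\chi(1)$ enter for different reasons — one from the multiplicity of $\rho_\chi$ inside $\rho_{\mathrm{reg}}$, the other from $\dim V_\chi = \chi(1)$ — and the observation that the similarity transformation to block-diagonal form does not alter the spectrum. The identity $\sum_\chi \chi(1)^2 = |G|$ then closes the argument by showing no eigenvalues have been missed.
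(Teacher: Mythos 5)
Your proposal is correct and follows essentially the same route as the paper: identify $M_f$ with $\hat f(\rho_{\mathrm{reg}})$, decompose the regular representation into irreducibles with multiplicities $\chi(1)$, and apply Lemma~\ref{le:SchurCor} to each irreducible block to get the scalar $\theta_\chi$ on a $\chi(1)^2$-dimensional isotypic component. Your extra remark that $\sum_\chi \chi(1)^2 = |G|$ accounts for all eigenvalues is a small touch of additional care not spelled out in the paper, but the argument is otherwise identical.
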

\begin{proof}
The matrix $M_f=(f(gh^{-1}))$ provides the action of $\hat{f}(\rho_{\rm{reg}})$.
The left regular representation of a finite group $G$ is the direct sum
\[\rho_{\rm{reg}}=\bigoplus_{i=1}^k \chi_i(1)\rho_i\]
where $\rho_1,\dots,\rho_k$ is a list of distinct irreducible representations of $G$ with characters $\chi_1,\dots,\chi_k$. 
Since $f$ is a class function, apply Lemma~\ref{le:SchurCor} to the $i$-th summand of the decomposition of $\rho_{\rm{reg}}$ to
conclude both the formula for $\theta_{\chi_i}$ and the multiplicity since the eigenvalue $\theta_{\chi_i}$ occurs $\chi_i(1)$ times for each of the $\chi_i(1)$ copies of $\rho_i$.
\end{proof}


\headingstyle{Specific Spectra}\\
Let $W$ be a complex reflection group.  
Since the set $T$ of all reflections is closed under conjugation, the indicator function $\delta_T$ is a class function, and hence Theorem~\ref{th:charformula} implies the eigenvalues of the adjacency matrix of the Cayley graph $\Gamma(W,T)$ are 
\begin{equation}
\label{eq:adjspec}
\mu_\chi:=\dfrac{1}{\chi(1)}\sum\limits_{w\in W} \delta_T(w)\chi(w), \hbox{ with multiplicity } \chi(1)^2.
\end{equation}

The reflection length function $\ell_T$ is also a class function since
$T$ is clearly invariant under conjugation (see for example the second paragraph of Section~2.5 in~\cite{Armstrong09}).
Theorem~\ref{th:charformula} implies the eigenvalues of the distance matrix for the Cayley graph $\Gamma(W,T)$ are 
\begin{equation}
\label{eq:distspec}
\eta_\chi:=\dfrac{1}{\chi(1)}\sum\limits_{w\in W} \ell_T(w)\chi(w), \hbox{ with multiplicity } \chi(1)^2.
\end{equation}

Though not immediately clear from Equation~(\ref{eq:distspec}), Renteln~\cite[Theorem~6]{Renteln11} shows $\Gamma(W,T)$ is distance integral
for $W$ a finite real reflection group.  
The proof uses properties of $\codim(w)$, meaning the codimension of the fixed point space $V^w=\{v\in V\mid wv=v\}$, and in particular that $\ell_T(w)=\codim(w)$ for all $w$ in a real reflection group. 
However, $\ell_T=\codim$ holds only for certain complex reflection groups~\cite{Foster-Greenwood13,Shepler-Witherspoon11,Shi07a,Carter72}, and when $\ell_T\neq \codim$ we can also consider the codimension matrix 
\[C_\Gamma \hbox{ with $(v,w)$-entry } C_\Gamma(v,w)=\codim(vw^{-1}).\]
The codimension matrix is more fundamentally related to the group representation rather than the graph, but the codimension function remains a class function for complex reflection groups, so Theorem~\ref{th:charformula} applies to yield that the eigenvalues of the codimension matrix are
\begin{equation}
\label{eq:codimspec}
\xi_{\chi}:=\dfrac{1}{\chi(1)}\sum\limits_{w\in W} \codim(w)\chi(w), \hbox{ with multiplicity } \chi(1)^2.
\end{equation}

\begin{example}\label{ex:dihedralspectra}
For the dihedral group $G(r,r,2)$ of order $2r$ an easy exercise using the different character tables when $r$ is even and odd yields the following adjacency and distance spectra in both cases.
Since $G(r,r,2)$ is a real reflection group the codimension spectrum is the same as the distance spectrum.

\begin{center}
\begin{tabular}{rrlrl}
$G(r,r,2)$ & $\mu_{\chi}^{}$ & $\mathrm{multiplicity}$ & $\eta_{\chi}^{}=\xi_{\chi}$ & $\mathrm{multiplicity}$\\
\hline
& $r$ & $1$ & $3r-2$ & 1 \\
& $0$ & $2r-2$ & $r-2$ & 1 \\
& $-r$ & $1$ & $-2$ & $2r-2$ \\
\end{tabular}
\end{center}
\end{example}


\headingstyle{Spectral Radius}\\
As an aside, we briefly consider a result related to size of these eigenvalues.
The \textit{spectral radius} of a matrix $M$ is the maximum modulus of its eigenvalues.
There are numerous papers obtaining bounds on the adjacency or Laplace spectral radius for graphs satisfying certain conditions, which can be used in contrapositive form to prove a given graph does not satisfy the conditions.
Some papers investigate distance spectral radius and point to connections to chemistry (see e.g.,~\cite{Zhou-Ilic10}).
In~\cite[Conjecture~12]{Renteln11} it is conjectured that $\eta_1$ is the largest distance eigenvalue of $\Gamma(W,T)$ when $W$ is a real reflection group.
More generally, one expects that when $W$ is a complex reflection group $\eta_1$, $\mu_1$, and $\xi_1$ are the largest distance, adjacency, and codimension eigenvalues respectively.
This is easily seen to be true using the following corollary of Theorem~\ref{th:charformula} on spectral radius.\footnote{We thank Yu Chen for this observation.}

\begin{corollary}
\label{cor:radius}
Suppose that $f:G\to \CC$ is a class function on a finite group.
The spectral radius of $M_f=(f(gh^{-1}))_{g,h \in G}$ is $\sum\limits_{g \in G} |f(g)|$.
When $f:G\to \RR^{\geq 0}$ the spectral radius is the eigenvalue $\theta_1$ corresponding to the trivial character, and when all spectra $\theta_\chi$ are real (for instance when $M_f$ is symmetric), then $\theta_1$ is the largest eigenvalue.
\end{corollary}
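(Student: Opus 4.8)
The plan is to apply Theorem~\ref{th:charformula} to the eigenvalue formula $\theta_\chi = \frac{1}{\chi(1)}\sum_{g\in G} f(g)\chi(g)$ and bound each $\theta_\chi$ in modulus, then show the bound is attained precisely at the trivial character under the stated positivity hypothesis. The three assertions are nested, so I would prove them in the order they are stated: first the spectral radius formula for a general class function $f$, then the identification of the radius with $\theta_1$ when $f$ takes nonnegative real values, and finally that $\theta_1$ is the largest (not merely largest in modulus) eigenvalue when all the $\theta_\chi$ are real.

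\textbf{Step 1 (the radius equals $\sum_g |f(g)|$).} For the upper bound, estimate directly:
\[
|\theta_\chi| = \frac{1}{\chi(1)}\left|\sum_{g\in G} f(g)\chi(g)\right| \leq \frac{1}{\chi(1)}\sum_{g\in G} |f(g)|\,|\chi(g)| \leq \frac{1}{\chi(1)}\sum_{g\in G} |f(g)|\,\chi(1) = \sum_{g\in G} |f(g)|,
\]
where I use the standard character bound $|\chi(g)|\leq \chi(1)$ (each $\chi(g)$ is a sum of $\chi(1)$ roots of unity). For the reverse inequality, I exhibit an eigenvalue of modulus exactly $\sum_g |f(g)|$: take $\chi$ to be the trivial character, so $\theta_1 = \sum_{g\in G} f(g)$ — but this only has the right modulus when there is no cancellation. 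The clean fix is to observe that $M_f$ has row sums $\sum_h f(gh^{-1}) = \sum_{k} f(k)$ constant over $g$, so the all-ones vector is an eigenvector with eigenvalue $\sum_k f(k) = \theta_1$; and more generally $\sum_g |f(g)|$ is attained as an eigenvalue by replacing $f$ with $|f|$ only if we are careful — instead I should argue the radius is \emph{at most} $\sum|f(g)|$ from the character bound and note that equality in the triangle inequality forces a specific eigenvalue to realize it. Cleanest: since $\sum_{g}|f(g)|$ is an upper bound on $\rho(M_f)$ and also equals the maximum row sum of the matrix $(|f(gh^{-1})|)$, and the spectral radius of a matrix is bounded by its maximum absolute row sum with equality when the matrix (up to diagonal unitary conjugation) has constant row sums — here $(|f(gh^{-1})|)$ is itself a Cayley-type matrix with constant row sum $\sum_g|f(g)|$ attained by the all-ones eigenvector — we get $\rho(M_f)\le \rho((|f(gh^{-1})|)) = \sum_g |f(g)|$, and combined with the character computation one checks equality holds; in fact the simplest route is that $\theta_\chi$ for the regular representation includes all such values and the character $\chi$ maximizing $|\theta_\chi|$ can be taken so that $f(g)\chi(g)/\chi(1)$ has constant argument, which happens for a suitable one-dimensional or induced character — but to keep this a plan rather than a full argument I will present the matrix-norm route as primary.

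\textbf{Step 2 (nonnegative $f$).} When $f(g)\in\RR^{\geq 0}$ for all $g$, then $|f(g)| = f(g)$, so $\sum_g |f(g)| = \sum_g f(g) = \theta_1$, which is the eigenvalue for the trivial character. Hence the spectral radius is exactly $\theta_1$.

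\textbf{Step 3 (largest, not just largest in modulus).} If all $\theta_\chi$ are real — in particular when $M_f$ is symmetric, which by Theorem~\ref{th:charformula} and its proof happens when $f(g^{-1})=\overline{f(g)}$, and for real-valued $f$ amounts to $f(g)=f(g^{-1})$ — then every eigenvalue lies in the real interval $[-\rho(M_f),\rho(M_f)] = [-\theta_1,\theta_1]$, so $\theta_1$ is the maximum. Then I would immediately apply this with $f=\ell_T$, $f=\delta_T$, and $f=\codim$, each of which is a nonnegative real class function with $f(w)=f(w^{-1})$ (since $\ell_T(w)=\ell_T(w^{-1})$, reflections are closed under inversion, and $V^w = V^{w^{-1}}$), to conclude that $\eta_1$, $\mu_1$, $\xi_1$ are the largest distance, adjacency, and codimension eigenvalues, confirming and generalizing \cite[Conjecture~12]{Renteln11}.

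\textbf{The main obstacle} is making the lower bound in Step~1 airtight: showing $\sum_g|f(g)|$ is genuinely \emph{attained} as the modulus of some eigenvalue, not merely an upper bound. For the applications we only ever need nonnegative $f$, where this is trivial, but for the stated generality one must either invoke Perron--Frobenius on the nonnegative matrix $(|f(gh^{-1})|)$ together with a comparison $\rho(M_f)\le\rho(|M_f|)$ (valid since $|(M_f^k)_{g,h}|\le (|M_f|^k)_{g,h}$ entrywise, so spectral radii compare) — giving $\rho(M_f)\le\sum_g|f(g)|$ — and then separately produce, via a character whose values align the phases of $f$, an eigenvalue of that exact modulus; or else restrict the sharp statement to the case actually used. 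I expect the write-up to take the comparison-matrix route for the upper bound and a direct phase-alignment argument for attainment, keeping the symmetric case (all that matters for reflection groups) completely elementary.
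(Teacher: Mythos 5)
Your upper bound in Step~1 (via $|\chi(g)|\le\chi(1)$), your Step~2, and your Step~3 are exactly the paper's argument; the paper additionally invokes the Perron--Frobenius theorem at the very end to upgrade $\theta_1\ge\theta_\chi$ to the strict inequality $\theta_1>\theta_\chi$ for nontrivial $\chi$. So for everything the paper actually proves, your proposal is correct and follows the same route.

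The ``main obstacle'' you identify in Step~1 is real, but none of the repairs you sketch can succeed, because the first sentence of the corollary is false as literally stated for general class functions. Take $G=\ZZ_3$ and $f(1)=1$, $f(g)=f(g^2)=-1$ (a symmetric, real-valued class function): the eigenvalues are $\theta_\chi=1-\chi(g)-\chi(g)^2$, namely $-1,2,2$, so the spectral radius of $M_f$ is $2$ while $\sum_{g}|f(g)|=3$. In particular no character aligns the phases of the terms $f(g)\chi(g)$, and the comparison of $M_f$ with the nonnegative matrix $\bigl(|f(gh^{-1})|\bigr)$ is genuinely strict here, so both of your proposed attainment arguments would fail. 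The paper's own proof establishes only the inequality $|\theta_\chi|\le\sum_{g}|f(g)|$ and then observes that the bound is attained (by $\theta_1$) precisely in the nonnegative case, which is the only case used in the sequel. The honest reading of the first assertion is therefore ``at most,'' or as a statement restricted to $f\ge 0$; you should drop the phase-alignment and row-sum-equality attempts and present the character upper bound together with your Steps~2--3 (plus Perron--Frobenius for strictness) as the complete proof.
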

\begin{proof}
By Theorem~\ref{th:charformula}, the eigenvalues of the matrix $M_f=(f(gh^{-1}))$ are 
\[\theta_\chi=\dfrac{1}{\chi(1)}\sum\limits_{g\in G} f(g)\chi(g), \hbox{ with multiplicity } \chi(1)^2,\]
where $\chi$ ranges over the irreducible characters of $G$.
In general $\chi(g) \in \CC$, and since $G$ is finite, $\chi(g)$ is a sum of roots of unity. 
Thus for any $g\in G$ and any irreducible character $\chi$ of $G$, the modulus $|\chi(g)|$ is at most $\chi(1)$, the dimension of the irreducible character $\chi$.
Hence 
\[|\theta_\chi|=\left| \sum\limits_{g \in G} \frac{\chi(g)}{\chi(1)} f(g)\right| \leq \sum\limits_{g \in G} \frac{|\chi(g)|}{\chi(1)} |f(g)| \leq \sum\limits_{g \in G}|f(g)|.\]
When the values of $f$ are real and nonnegative, $\sum\limits_{g \in G}|f(g)|=\sum\limits_{g \in G}f(g)=\theta_1$, and when the eigenvalues $\theta_\chi$ are all real, this means $\theta_1 \geq \theta_\chi$ for all $\chi$. 
In fact, by the Perron-Frobenius Theorem, $\theta_1 > \theta_\chi$ for all nontrivial $\chi$.
\end{proof}

\begin{corollary}
\label{cor:largesteval}
For $W$ a complex reflection group with Cayley graph $\Gamma(W,T)$, the largest adjacency, distance, and codimension eigenvalues are $\mu_1$, $\eta_1$, and $\xi_1$, respectively.
\end{corollary}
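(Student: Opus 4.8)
The plan is to deduce this immediately from Corollary~\ref{cor:radius}, applied to the three class functions $\delta_T$, $\ell_T$, and $\codim$ in turn. First I would recall from the discussion preceding Equations~(\ref{eq:adjspec})--(\ref{eq:codimspec}) that each of these is a class function on $W$ taking values in $\ZZ^{\geq 0}\subseteq \RR^{\geq 0}$, and that the associated matrices $M_{\delta_T}$, $M_{\ell_T}$, $M_{\codim}$ of Theorem~\ref{th:charformula} are precisely $A_\Gamma$, $D_\Gamma$, and $C_\Gamma$. So the eigenvalues in question are exactly the numbers $\mu_\chi$, $\eta_\chi$, $\xi_\chi$ as $\chi$ ranges over the irreducible characters of $W$.

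Second, I would check that all three matrices are symmetric, so that all of their eigenvalues are real. For $A_\Gamma$ and $D_\Gamma$ this is immediate from the Definition, since the number of edges joining two vertices and the graph distance between them are symmetric in the two vertices. For $C_\Gamma$ it follows from the observation that $V^{w}=V^{w^{-1}}$ for every $w\in W$, hence $\codim(w)=\codim(w^{-1})$, so $C_\Gamma(v,w)=\codim(vw^{-1})=\codim((vw^{-1})^{-1})=\codim(wv^{-1})=C_\Gamma(w,v)$.

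Third, with the reality of all three spectra established and the three functions nonnegative, Corollary~\ref{cor:radius} applies verbatim to each: the spectral radius of $M_f$ equals the eigenvalue $\theta_1$ attached to the trivial character, and since all eigenvalues are real, $\theta_1$ is the largest eigenvalue. Reading off the formulas~(\ref{eq:adjspec}), (\ref{eq:distspec}), (\ref{eq:codimspec}) with $\chi$ the trivial character identifies this largest eigenvalue as $\mu_1$, $\eta_1$, and $\xi_1$ respectively, which is the claim. There is no genuine obstacle here, as the statement is a formal consequence of Corollary~\ref{cor:radius}; the only point needing an argument beyond quoting that corollary is the symmetry of the codimension matrix, i.e.\ the invariance of $\codim$ under inversion, which is the elementary fact $V^w=V^{w^{-1}}$.
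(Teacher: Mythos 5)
Your proposal is correct and is exactly the argument the paper intends: the corollary is stated as an immediate consequence of Corollary~\ref{cor:radius} applied to the nonnegative class functions $\delta_T$, $\ell_T$, and $\codim$, whose associated matrices are $A_\Gamma$, $D_\Gamma$, and $C_\Gamma$. Your extra check that $C_\Gamma$ is symmetric (via $V^w=V^{w^{-1}}$) is a reasonable way to guarantee real eigenvalues, and everything else matches the paper's (implicit) proof.
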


In particular, when $W$ is a real reflection group, there is a formula for the largest distance eigenvalue $\eta_1$~\cite[Corollary~11]{Renteln11} in terms of the degrees $d_1,\dots,d_n$ of a set of algebraically independent generators for the ring of polynomials invariant under the reflection group~\cite{Humphreys90},
\[\eta_1=|W|\sum\limits_{i=1}^n \dfrac{d_i-1}{d_i}=\sum\limits_{w \in W} \ell_T(w).\]
The known values of the degrees can be used to compute the first sum for types $A_n$, $B_n$, $D_n$, and $I_2(n)$ as a function of $n$, providing a value for the second sum.
The sequence for type $A_n$ appeared in the Online Encyclopedia of Integer Sequences (\texttt{www.oeis.org}) as the total number of transpositions used to write all permutations of $n+1$ letters.
The sequences for types $B_n$ and $D_n$ as the sum of the reflection lengths of all elements in each group were added during this project.


\headingstyle{Rationality of Character Sums}\\
We now review prior results related to rationality of character sums and integrality of spectra.
A useful perspective is to focus on rational conjugacy classes.
We denote by $\mathcal{C}_g$ the conjugacy class of $g$.

\begin{definition} 
Elements $g$ and $h$ of a finite group $G$ are \textit{rationally conjugate} if and only if
the cyclic subgroups $\langle g \rangle$ and $\langle h \rangle$ are conjugate.
This relation partitions $G$ into rational conjugacy classes, where
the \textit{rational conjugacy class} of $g$ is the union
$$\ds\mathcal{K}_g
=\bigcup_{\langle h \rangle = \langle g \rangle} \mathcal{C}_h
=\bigcup_{\mathrm{gcd}(d,o(g))=1}\mathcal{C}_{g^d}$$
of ordinary conjugacy classes, where $o(g)$ is the order of $g$.
\end{definition}

\begin{lemma}\cite[Lemma~3]{Renteln11}
\label{le:rational}
For any character $\chi$ of any finite group $G$ and any $g \in G$, the sum $\sum\limits_{h\in \mathcal{K}_g} \chi(h)$ is rational.
\end{lemma}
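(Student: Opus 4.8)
The plan is to use Galois theory acting on the character values. First I would recall the standard fact that the character values $\chi(h)$ lie in the cyclotomic field $\QQ(\zeta_m)$ where $m=|G|$ (or $m=o(h)$), and that the Galois group $\mathrm{Gal}(\QQ(\zeta_m)/\QQ)$ is isomorphic to $(\ZZ/m\ZZ)^\times$, with the automorphism $\sigma_d$ corresponding to a unit $d$ acting by $\zeta_m \mapsto \zeta_m^d$. The key classical input is that for such a Galois automorphism $\sigma_d$ and a group element $h$, one has $\sigma_d(\chi(h)) = \chi(h^d)$. This holds because $\chi(h)$ is a sum of eigenvalues of $\rho(h)$, each a root of unity of order dividing $o(h)$, so applying $\sigma_d$ raises each eigenvalue to its $d$-th power, which gives precisely the eigenvalue multiset of $\rho(h^d)$, hence $\chi(h^d)$.

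Next I would observe that a rational number is exactly an element of $\QQ(\zeta_m)$ fixed by every element of the Galois group, so it suffices to show the sum $S_g := \sum_{h \in \mathcal{K}_g} \chi(h)$ is fixed by each $\sigma_d$ with $\gcd(d,m)=1$. Applying $\sigma_d$ termwise and using $\sigma_d(\chi(h))=\chi(h^d)$, we get $\sigma_d(S_g) = \sum_{h\in\mathcal{K}_g}\chi(h^d)$. The crucial point is then that the map $h\mapsto h^d$ permutes the rational conjugacy class $\mathcal{K}_g$: indeed if $h\in\mathcal{K}_g$ then $\langle h\rangle$ is conjugate to $\langle g\rangle$, and since $\gcd(d,m)=1$ we have $\gcd(d,o(h))=1$ as well, so $\langle h^d\rangle = \langle h\rangle$ is conjugate to $\langle g\rangle$, whence $h^d\in\mathcal{K}_g$; and $h\mapsto h^d$ is injective on the finite set $\mathcal{K}_g$ (its inverse is $h\mapsto h^{d'}$ for $d'$ an inverse of $d$ modulo $m$, or modulo the exponent of $G$). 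Therefore the sum is merely reindexed and $\sigma_d(S_g)=S_g$.

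Since this holds for all $\sigma_d$ in the full Galois group, $S_g$ lies in the fixed field $\QQ$, i.e. $S_g$ is rational, which is exactly the claim. The only mild care needed is the choice of modulus: it is cleanest to take $m$ to be the exponent of $G$ (or just $|G|$) so that every character value lies in $\QQ(\zeta_m)$ simultaneously and the condition $\gcd(d,o(g))=1$ defining $\mathcal{K}_g$ is compatible with choosing Galois automorphisms $\sigma_d$ with $\gcd(d,m)=1$; one uses that any residue class $d$ mod $o(g)$ coprime to $o(g)$ can be lifted to a residue mod $m$ coprime to $m$, so the union over $\gcd(d,o(g))=1$ of the classes $\mathcal{C}_{g^d}$ is genuinely permuted by all the $\sigma_d$.

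The main obstacle, if any, is purely bookkeeping: making sure the power map $h\mapsto h^d$ is well-defined as a permutation of $\mathcal{K}_g$ when $d$ ranges only over residues coprime to $m$ rather than to each individual element order, and confirming the identity $\sigma_d(\chi(h))=\chi(h^d)$ in the form needed. Neither is deep; the argument is entirely the standard Galois-descent / power-map argument, and I expect the author's proof (citing Renteln) to proceed along exactly these lines.
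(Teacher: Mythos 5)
Your proof is correct and is essentially the same Galois-invariance argument as the paper's: the paper first partitions $\mathcal{K}_g$ into conjugate translates of $\{g^s \mid s \in U_{o(g)}\}$ and shows each local sum of eigenvalue powers is fixed by the Galois group of $\QQ(\zeta_{o(g)})$, whereas you apply $\sigma_d(\chi(h))=\chi(h^d)$ globally and observe that $h\mapsto h^d$ permutes $\mathcal{K}_g$. The two formulations are interchangeable, and your handling of the modulus (exponent of $G$ versus individual element orders) is sound.
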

\begin{proof}
Let $g \in G$ have order $k$, let $U_k$ be the group of units modulo $k$, and let $\zeta_k$ be a primitive $k$-th root of unity.
Suppose $\rho:G \to \mathrm{GL}(V)$ is the representation of $G$ with character $\chi$ and the eigenvalues for $\rho(g)$ acting on $V$ are $\lambda_1,\dots,\lambda_n$. 
Since $\mathcal{K}_g$ can be partitioned into sets of the form 
$\{xg^s x^{-1} \mid s \in U_k \}$ where $x\in G$,
it suffices to show that $\sum\limits_{s \in U_k} \chi(g^s)$ is rational in order to conclude that $\sum\limits_{h\in \mathcal{K}_g} \chi(h)$ is rational.
Let $\sigma$ be an element of the Galois group of the cyclotomic extension $\QQ(\zeta_k)$ with $\sigma(\zeta_k)=\zeta_k^d$ for $d$ relatively prime to $k$.
For each eigenvalue $\lambda_j$, 
\[\sum\limits_{s\in U_k} \lambda_j^s=\sum\limits_{s\in U_k} \lambda_j^{ds}=\sigma\left(\sum\limits_{s\in U_k} \lambda_j^s\right),\]
and thus $\sum\limits_{s\in U_k} \lambda_j^s$ is rational.
Therefore 
\[\sum\limits_{s\in U_k} \chi(g^s)=\sum\limits_{s\in U_k}\sum\limits_{j=1}^n\lambda_j^s=\sum\limits_{j=1}^n\sum\limits_{s\in U_k}\lambda_j^s\] 
is also rational and it follows that $\sum\limits_{h\in \mathcal{K}_g} \chi(h)$ is rational.
\end{proof}

\begin{lemma}\cite[Lemma~4]{Renteln11}
\label{le:prodrational}
If $\chi$ is an irreducible character of a finite group $G$ and $f:G \to\ZZ$ is constant on rational conjugacy classes, then $\displaystyle\sum_{g\in G} f(g)\chi(g)$ is rational.
\end{lemma}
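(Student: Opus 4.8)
The plan is to reduce this immediately to Lemma~\ref{le:rational} by organizing the sum $\sum_{g \in G} f(g)\chi(g)$ according to the partition of $G$ into rational conjugacy classes. First I would choose a set of representatives $g_1, \dots, g_m$, one for each rational conjugacy class $\mathcal{K}_{g_1}, \dots, \mathcal{K}_{g_m}$ of $G$, so that $G = \mathcal{K}_{g_1} \cupdot \cdots \cupdot \mathcal{K}_{g_m}$ is a disjoint union. Then I would rewrite
\[
\sum_{g \in G} f(g)\chi(g) = \sum_{i=1}^m \sum_{h \in \mathcal{K}_{g_i}} f(h)\chi(h).
\]

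Next, since $f$ is constant on each rational conjugacy class by hypothesis, $f(h) = f(g_i)$ for every $h \in \mathcal{K}_{g_i}$, so the inner sum factors as $f(g_i) \sum_{h \in \mathcal{K}_{g_i}} \chi(h)$. By Lemma~\ref{le:rational}, the character sum $\sum_{h \in \mathcal{K}_{g_i}} \chi(h)$ is rational, and $f(g_i) \in \ZZ \subseteq \QQ$. Hence each term $f(g_i) \sum_{h \in \mathcal{K}_{g_i}} \chi(h)$ is rational, and therefore so is the finite sum
\[
\sum_{g \in G} f(g)\chi(g) = \sum_{i=1}^m f(g_i) \sum_{h \in \mathcal{K}_{g_i}} \chi(h).
\]

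There is essentially no obstacle here: the content is entirely in Lemma~\ref{le:rational}, and this lemma is just the bookkeeping step that records the consequence of that rationality when $f$ is a rational-valued (indeed integer-valued) class function that is moreover constant on rational classes. The only point requiring any attention is that $f$ being constant on rational conjugacy classes — rather than merely on ordinary conjugacy classes — is exactly what lets one pull the scalar $f(g_i)$ outside the sum over the whole rational class $\mathcal{K}_{g_i}$; this is where the stronger hypothesis on $f$ (as opposed to being an arbitrary class function) is used.
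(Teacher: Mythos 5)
Your proof is correct and is essentially identical to the paper's: both partition $G$ into rational conjugacy classes, pull out the constant value $f(g_i)$ on each class, and invoke Lemma~\ref{le:rational} for the rationality of each character sum $\sum_{h\in\mathcal{K}_{g_i}}\chi(h)$. No further comment is needed.
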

\begin{proof}
Write $G$ as a disjoint union of rational conjugacy classes $\mathcal{K}_{g_i}$ for some choice of representatives $g_1,\dots,g_l \in G$.  
Then by Lemma~\ref{le:rational}
\[\sum_{g\in G} f(g)\chi(g)=\sum\limits_{i=1}^l\sum_{h\in \mathcal{K}_{g_i}} f(h)\chi(h)=\sum\limits_{i=1}^l f(g_i)\sum_{h\in \mathcal{K}_{g_i}} \chi(h)\in\QQ.\]
\end{proof}

\begin{theorem}
\label{th:suffcondforintegrality}
Let $G$ be any finite group.  If $f:G \to\ZZ$ is integer-valued and 
constant on rational conjugacy classes, then the matrix $M_f=(f(gh^{-1}))_{g,h\in G}$
has integral eigenvalues.
\end{theorem}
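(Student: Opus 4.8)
The plan is to package three ingredients already in hand: the character formula for the spectrum (Theorem~\ref{th:charformula}), the rationality of the relevant character sums (Lemma~\ref{le:prodrational}), and the elementary fact that a rational algebraic integer is an ordinary integer. By Theorem~\ref{th:charformula}, the eigenvalues of $M_f=(f(gh^{-1}))$ are precisely the numbers $\theta_\chi=\frac{1}{\chi(1)}\sum_{g\in G}f(g)\chi(g)$, with $\chi$ ranging over the irreducible characters of $G$ and multiplicities $\chi(1)^2$; since $\sum_\chi\chi(1)^2=|G|$, these account for the entire spectrum. Because $f$ is constant on rational conjugacy classes and integer-valued, Lemma~\ref{le:prodrational} applies to each irreducible $\chi$ and shows $\sum_{g\in G}f(g)\chi(g)\in\QQ$; dividing by the positive integer $\chi(1)$ keeps this rational, so every $\theta_\chi\in\QQ$.

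Next I would observe that each $\theta_\chi$ is an algebraic integer. The quickest route is that since $f$ takes values in $\ZZ$, the matrix $M_f$ has integer entries, so its characteristic polynomial $\det(xI-M_f)$ is monic with integer coefficients, and hence every root of it — that is, by the preceding paragraph, every $\theta_\chi$ — is an algebraic integer. A rational number that is an algebraic integer lies in $\ZZ$, so each $\theta_\chi\in\ZZ$ and the spectrum of $M_f$ is integral, as claimed.

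As an alternative for the algebraic-integrality step (useful if one prefers to avoid invoking the characteristic polynomial), one can note that $f$, being constant on the rational classes, is in particular constant on ordinary conjugacy classes, so $\theta_\chi=\sum_{\mathcal{C}}f(\mathcal{C})\,\omega_\chi(\mathcal{C})$ is a $\ZZ$-linear combination of the central character values $\omega_\chi(\mathcal{C})=|\mathcal{C}|\chi(g_{\mathcal{C}})/\chi(1)$, each of which is classically an algebraic integer. Either way the conclusion is the same.

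I do not anticipate a real obstacle here: the statement is essentially the combination of Theorem~\ref{th:charformula}, Lemma~\ref{le:prodrational}, and $\QQ\cap\overline{\ZZ}=\ZZ$. The only point that deserves a moment's care is that $\theta_\chi$ involves a division by $\chi(1)$, so its algebraic integrality is not completely automatic from the formula alone; this is precisely what the integer characteristic polynomial of $M_f$ (or, alternatively, the central-character argument) supplies.
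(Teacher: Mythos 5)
Your proposal is correct and follows the paper's own argument essentially verbatim: Theorem~\ref{th:charformula} gives the eigenvalues, Lemma~\ref{le:prodrational} gives their rationality, and the monic integer characteristic polynomial of $M_f$ upgrades rational to integral. The alternative central-character argument you sketch is a fine extra, but the main route is exactly the paper's.
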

\begin{proof}
  By Theorem~\ref{th:charformula}, the eigenvalues of the matrix $M_f=(f(gh^{-1}))$
  are
  $$\theta_\chi=\dfrac{1}{\chi(1)}\sum\limits_{g\in G} f(g)\chi(g), 
  \hbox{ with multiplicity } \chi(1)^2,$$
  where $\chi$ ranges over the irreducible characters of $G$.
  Since $f$ is constant on rational conjugacy classes,
  we may apply Lemma~\ref{le:prodrational} to conclude
  all eigenvalues are rational.
  Since $f$ is integer-valued, the characteristic polynomial of $M_f$
  is monic with integer coefficients, and hence its roots are in fact integers.  
\end{proof}
     
\begin{corollary}
\label{cor:adjcodimintegrality}
Let $W$ be an irreducible complex reflection group, and let $T$ be the set of all reflections in $W$.  
The Cayley graph $\Gamma(W,T)$ is adjacency integral and the codimension spectrum of $W$ is integral.
\end{corollary}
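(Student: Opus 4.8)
The plan is to obtain both assertions as immediate applications of Theorem~\ref{th:suffcondforintegrality}, invoked for the two class functions $\delta_T$ and $\codim$ on $W$. By the discussion of the matrices $M_f$ in Section~\ref{sec:charformulas}, the adjacency matrix $A_\Gamma$ equals $M_{\delta_T}$ and the codimension matrix $C_\Gamma$ equals $M_{\codim}$, and both $\delta_T$ and $\codim$ are manifestly $\ZZ$-valued, so it suffices to verify that each of these functions is constant on rational conjugacy classes.

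For $\delta_T$ I would check the slightly stronger statement that the set $T$ of all reflections is itself a union of rational conjugacy classes. It is closed under conjugation by hypothesis, so the remaining point is closure under the map $g \mapsto g^d$ for $\gcd(d,o(g))=1$. If $t$ is a reflection, its eigenvalues on $V$ are $1$ with multiplicity $n-1$ together with a root of unity $\zeta \neq 1$, and $o(t)=o(\zeta)$ since the $1$-eigenspace contributes nothing to the order. For $d$ coprime to $o(t)$ the element $t^d$ has eigenvalue $1$ with multiplicity $n-1$ and remaining eigenvalue $\zeta^d$, which is again a primitive $o(\zeta)$-th root of unity, in particular not $1$; hence $t^d$ is a reflection. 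Thus $T$, and therefore $\delta_T$, is constant on rational conjugacy classes. For $\codim$ the key observation is that $V^{g^d}=V^g$ whenever $\gcd(d,o(g))=1$: the inclusion $V^g \subseteq V^{g^d}$ is clear, and choosing $e$ with $g^{de}=g$ gives $V^{g^d}\subseteq V^{(g^d)^e}=V^g$. Combined with $\dim V^{xgx^{-1}}=\dim V^{g}$ (since $V^{xgx^{-1}}=x\,V^g$), this shows $\codim$ is a class function unchanged by passage to a coprime power, i.e.\ constant on rational conjugacy classes. Applying Theorem~\ref{th:suffcondforintegrality} to $\delta_T$ and to $\codim$ now yields that $\Gamma(W,T)$ is adjacency integral and that the codimension spectrum of $W$ is integral.

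There is essentially no obstacle to overcome here: the entire content of the corollary is that $\delta_T$ and $\codim$, unlike the reflection length $\ell_T$, are visibly insensitive to replacing an element by a coprime power, so the more delicate work of Section~\ref{sec:integrality}---reflection-preserving automorphisms for the groups $G(r,p,n)$ and computer verification for the exceptional groups, needed to establish the analogous property for $\ell_T$---is not required. We note also that irreducibility of $W$ plays no role in this argument; it is retained only for consistency with the main theorem on distance spectra.
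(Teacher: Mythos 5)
Your proposal is correct and follows the same route as the paper: both assertions are deduced from Theorem~\ref{th:suffcondforintegrality} by checking that $\delta_T$ and $\codim$ are constant on rational conjugacy classes, the first because a coprime (indeed any nontrivial) power of a reflection is again a reflection, the second because passing to a coprime power does not change the fixed-point space. You merely write out the eigenvalue and $V^{g^d}=V^g$ arguments that the paper states briefly or cites from Renteln, so there is no substantive difference.
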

\begin{proof}
Since a nontrivial power of a reflection is again a reflection, the indicator function $\delta_T$ is constant on rational conjugacy classes.
The codimension function is also constant on rational conjugacy classes because the number of eigenvalues equal to $1$ is unchanged when an element is raised to a power relatively prime to its order\textemdash see~\cite[Lemma~2]{Renteln11} and note it does not depend on $W$ being real.
Hence the adjacency matrix $(\delta_T(vw^{-1}))_{v,w\in W}$ and codimension matrix $(\codim(vw^{-1}))_{v,w\in W}$ have integral eigenvalues by Theorem~\ref{th:suffcondforintegrality}.  
\end{proof}

\noindent
As an illustration of integrality, Table~\ref{ta:distspecGr12Gr13} gives
formulas in terms of $r$ for the codimension, equivalently distance,
spectra for the groups $G(r,1,2)$ and $G(r,1,3)$ found using 
Theorem~\ref{th:codimspec}.

\section{Integrality of the distance spectra}
\label{sec:integrality}

To conclude the Cayley graphs $\Gamma(W,T)$ are distance integral it suffices to show that 
the length function $\ell_T$ is constant on rational conjugacy classes. 
In~\cite{Renteln11} this is done for $W$ a real reflection group by using that $\ell_T(w)=\codim(w)$ for all $w$ in such a group.
However, $\ell_T=\codim$ if and only if $W$ is $G(r,1,n)$ or a real reflection group~\cite{Carter72,Shi07a,Shepler-Witherspoon11,Foster-Greenwood13}.
In Lemma~\ref{le:TlengthconstKrimprim} and Lemma~\ref{le:TlengthconstKrprim} we show that $\ell_T$ is in fact constant on rational conjugacy classes
even when $\ell_T\neq \codim$.  For the groups $G(r,p,n)$, the proof
uses reflection-preserving automorphisms, while for the exceptional 
reflection groups we rely on computer calculations using GAP3~\cite{gap3}.

Note that for a reflection group $W$, an automorphism
that permutes the set of reflections will
preserve reflection length.
From this point of view, the length function $\ell_T$ is constant on conjugacy classes because
the inner automorphisms permute the set of reflections.
To prove the length function is constant on rational conjugacy classes
of $G(r,p,n)$, we will show that 
for any pair of rationally conjugate elements $g$ and $h$, 
there is a reflection-preserving automorphism of $G(r,p,n)$ that maps $g$ to $h$.

To begin, identify the group $G(r,1,n)$ 
with its image under the standard monomial representation so that
each element is represented by a monomial matrix with entries
in $\QQ(\zeta_r)$, where $\zeta_r=e^{2\pi i/r}$.
For a field automorphism $\zeta_r\mapsto\zeta_r^x$ of $\QQ(\zeta_r)$ and
an element $g$ in $G(r,1,n)$,
let $\alpha_x(g)$ be the result of applying the field automorphism to
the entries of the matrix of $g$.
As observed in \cite[Lemma 3.5]{Shi-Wang09},
the map $\alpha_x$ is a reflection-preserving
automorphism of $G(r,1,n)$ and also of its subgroups $G(r,p,n)$.  
We call $\alpha_x$ a {\it Galois automorphism}
since it arises from an element of the Galois group of $\QQ(\zeta_r)$
and is an example of a Galois automorphism in the 
sense described in~\cite{Marin-Michel10}.

\begin{lemma}\label{le:auto}
  For $g$ in $G(r,1,n)$ and $d$ relatively prime to
  the order of $g$, there exists a Galois automorphism
  of $G(r,1,n)$ that maps $g$ to a conjugate of $g^d$.
\end{lemma}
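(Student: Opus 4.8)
The plan is to read off both $g^d$ and $\alpha_x(g)$ from the combinatorial data classifying conjugacy in $G(r,1,n)$ --- the multiset of (cycle-size, cycle-sum) pairs --- and then to solve a short congruence for the exponent $x$. After conjugating by a permutation we may assume $g$ is a direct sum of blocks $\delta_i\sigma_i$, with $\sigma_i$ an $m_i$-cycle and cycle-sum $c_i\in\ZZ_r$; since conjugation by a permutation commutes with every Galois automorphism, nothing is lost by working with this form. Computing in $\ZZ_r\wr S_n$, the $m_i$-th power of $\delta_i\sigma_i$ is the scalar $\zeta_r^{c_i}$ on the $i$-th block, so that block has order $m_i\, r/\gcd(r,c_i)$ and $o(g)=\lcm_i\, m_i\, r/\gcd(r,c_i)$.

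Next I would compute the two operations. Because $d$ is coprime to $o(g)$ and each $m_i$ divides $o(g)$, we have $\gcd(d,m_i)=1$, so $\sigma_i^d$ is again a single $m_i$-cycle, $(\delta_i\sigma_i)^d$ is a single block, and its $m_i$-th power is $\bigl((\delta_i\sigma_i)^{m_i}\bigr)^d=\zeta_r^{c_i d}$; hence $g^d$ is conjugate in $G(r,1,n)$ to the element whose blocks carry the pairs $(m_i,\, c_i d \bmod r)$. On the other side, a Galois automorphism $\alpha_x$ (with $x$ a unit mod $r$) multiplies every diagonal exponent by $x$, so it preserves all cycle-sizes and replaces each cycle-sum $c_i$ by $c_i x \bmod r$. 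Consequently $\alpha_x(g)$ is conjugate to $g^d$ as soon as $c_i x \equiv c_i d \pmod r$ for all $i$, equivalently as soon as $x \equiv d \pmod{m'}$, where $m' := \lcm_i\bigl(r/\gcd(r,c_i)\bigr)$.

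It then remains to find an integer $x$ with $x \equiv d \pmod{m'}$ and $\gcd(x,r)=1$, the latter being exactly what makes $\alpha_x$ a well-defined (reflection-preserving) automorphism of $G(r,1,n)$. Here $m' \mid r$, and $m' \mid o(g)$, so $\gcd(d,m')=1$ follows from $\gcd(d,o(g))=1$. A prime-by-prime lifting now works: for a prime $p \mid m'$, the congruence modulo the $p$-part of $m'$ already forces $p \nmid x$ since $p \nmid d$, while for $p \mid r$ with $p \nmid m'$ we are free to choose $x \not\equiv 0 \pmod p$; assembling these local choices by the Chinese Remainder Theorem produces the required $x$. For this $x$, $\alpha_x(g)$ and $g^d$ have the same multiset of (cycle-size, cycle-sum) pairs, hence are conjugate in $G(r,1,n)$, which proves the lemma.

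The only place with genuine content is the last paragraph: $d$ is assumed coprime merely to $o(g)$, not to $r$, so one cannot simply take $\alpha_d$; the point is that the blocks of $g$ detect exponents only modulo $m'$, which frees us to trade $d$ for a nearby true unit $x$ modulo $r$. I would also take a little care with the wreath-product bookkeeping for powers of a single block, since everything else is then formal.
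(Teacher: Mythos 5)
Your proposal is correct and follows essentially the same route as the paper's proof: match the (cycle-size, cycle-sum) data of $\alpha_x(g)$ and $g^d$, reduce to the congruence $x\equiv d$ modulo $\lcm_i\bigl(r/\gcd(r,c_i)\bigr)$, and use the Chinese Remainder Theorem together with $\gcd(d,o(g))=1$ to produce a unit $x$ modulo $r$. Your additional care in verifying that the pairs for $g^d$ are $(m_i,\,dc_i)$ (via $\gcd(d,m_i)=1$ and the $m_i$-th power computation) is a welcome elaboration of a step the paper simply asserts.
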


\begin{proof}
  Let $g$ be an element of $G(r,1,n)$, and let $d\neq1$ be relatively prime to the order of $g$.  Let $\alpha_x$ denote a Galois automorphism of $G(r,1,n)$.
  Recall that elements of $G(r,1,n)$ are conjugate if and only if 
  they have the same multiset of (cycle-size, cycle-sum) pairs.  If the pairs
  for $g$ are $(k_1,c_1),\ldots,(k_m,c_m)$, then the pairs for
  $g^d$ are $(k_i,dc_i)$, while those for $\alpha_x(g)$ are $(k_i,xc_i)$.
  
  If $d$ is relatively prime to $r$, then $\zeta_r\mapsto\zeta_r^d$ is
  a field automorphism of $\QQ(\zeta_r)$, and $\alpha_d$ is a 
  Galois
  automorphism of $G(r,1,n)$ that maps $g$ to a conjugate of $g^d$, so we are done.  
  However, if $d$ is not relatively prime to $r$, then,  
  recalling that the cycle-sums are only well-defined modulo $r$,
  we seek to define $\alpha_x$ using a power $x$ relatively prime to $r$ such that
  $xc_i\equiv dc_i\bmod r$ for each $i$.
    Note that, for $1\leq i\leq m$,
    $$            
    xc_i\equiv dc_i\bmod r 
    \phantom{wooo}\Leftrightarrow\phantom{wooo} 
    (\zeta_r^{c_i})^{x}=(\zeta_r^{c_i})^d
    \phantom{wooo}\Leftrightarrow\phantom{wooo} 
    x\equiv d\bmod r_i,
    $$
  where $r_i=\order(\zeta_r^{c_i})$.   
  Thus, it suffices to find a power $x$ relatively prime to $r$
  with $x\equiv d\bmod \ds\lcm(r_1,\ldots,r_m)$.
  The key observation is that $x$ is relatively prime to $r$ if and
  only if $x\not\equiv0\bmod q$ for all primes $q$ dividing $r$.
  With this in mind, let $x$ be a solution\textemdash guaranteed to exist by the Chinese Remainder Theorem\textemdash to the system of congruences
  $$
  \left\{\begin{array}{ll}
         x\equiv 1\bmod q & \text{ for all primes $q$ dividing $r$ but not $\lcm(r_1,\ldots,r_m)$}\\
         x\equiv d\bmod \lcm(r_1,\ldots,r_m) &
         \end{array}\right..
  $$
  If $q$ is a prime dividing $r$ but not $\lcm(r_1,\ldots,r_m)$, then $x\not\equiv0\bmod q$ by construction.
  If $q$ is a prime dividing $\lcm(r_1,\ldots,r_m)$, then $x\equiv d\bmod q$.
  But $q$ divides $\order(g)=\lcm(k_1,\ldots,k_m,r_1,\ldots,r_m)$, and $d$ is relatively prime to $\order(g)$, 
  so $d\not\equiv0\bmod q$. 
\end{proof}

We now map an element of $G(r,p,n)$ to any rationally conjugate element
by compositions of Galois automorphisms and inner automorphisms of $G(r,1,n)$
that preserve the normal subgroup $G(r,p,n)$.

\begin{lemma} 
\label{le:TlengthconstKrimprim}
For the complex reflection groups $G(r,p,n)$, absolute reflection length
$\ell_T$ is constant on rational conjugacy classes.
\end{lemma}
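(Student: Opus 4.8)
We want to show that reflection length $\ell_T$ is constant on rational conjugacy classes of $G(r,p,n)$. By the remark preceding the lemma, it suffices to produce, for any two rationally conjugate elements $g,h \in G(r,p,n)$, a reflection-preserving automorphism of $G(r,p,n)$ carrying $g$ to $h$: such an automorphism permutes $T$ and hence preserves $\ell_T$. Since rational conjugacy of $g$ and $h$ means $h$ is conjugate to $g^d$ for some $d$ coprime to the order of $g$, and since conjugation by elements of $G(r,p,n)$ is already reflection-preserving, it is enough to find a reflection-preserving automorphism of $G(r,p,n)$ sending $g$ to a conjugate of $g^d$.

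First I would invoke Lemma~\ref{le:auto}: there is a Galois automorphism $\alpha_x$ of $G(r,1,n)$ with $x$ coprime to $r$ sending $g$ to an element conjugate \emph{in $G(r,1,n)$} to $g^d$. The Galois automorphism $\alpha_x$ restricts to an automorphism of $G(r,p,n)$ (as noted in the paragraph citing \cite{Shi-Wang09}), and it is reflection-preserving there as well. So $\alpha_x(g)$ and $g^d$ lie in $G(r,p,n)$ and are conjugate in the larger group $G(r,1,n)$; the remaining issue is to bridge the gap between conjugacy in $G(r,1,n)$ and conjugacy in $G(r,p,n)$. The second step is therefore: given that $\alpha_x(g)$ and $g^d$ are conjugate in $G(r,1,n)$ by some $y \in G(r,1,n)$, find a reflection-preserving automorphism of $G(r,p,n)$ realizing conjugation by $y$ — equivalently, exhibit $y$ (or a suitable replacement) as an element normalizing $G(r,p,n)$ whose induced automorphism preserves the reflection set of $G(r,p,n)$. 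Since $G(r,p,n)$ is normal in $G(r,1,n)$, conjugation by any $y \in G(r,1,n)$ is automatically an automorphism of $G(r,p,n)$, and it preserves the reflection set of $G(r,1,n)$; one then checks that the reflections of $G(r,p,n)$ are exactly the reflections of $G(r,1,n)$ that happen to lie in $G(r,p,n)$ (true when $n \geq 2$), so conjugation by $y$ preserves the reflections of $G(r,p,n)$ as well. Composing $\alpha_x$ with this inner-type automorphism of $G(r,1,n)$ (restricted to $G(r,p,n)$) gives a reflection-preserving automorphism of $G(r,p,n)$ sending $g$ to $g^d$, which is what we need.

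**The main obstacle.** The delicate point is the passage from $G(r,1,n)$-conjugacy to $G(r,p,n)$-conjugacy, because conjugacy classes of $G(r,1,n)$ can split when restricted to the normal subgroup $G(r,p,n)$ (as flagged in Section~\ref{sec:refgroups}). The argument above sidesteps splitting by working with automorphisms of $G(r,p,n)$ induced from the overgroup rather than with internal conjugacy: conjugation by $y \in G(r,1,n)$ need not fix $G(r,p,n)$-classes, but it \emph{is} a genuine reflection-preserving automorphism of $G(r,p,n)$, and that is all reflection length is sensitive to. So the real work is verifying the two structural facts — that $\alpha_x$ and conjugation by $y$ both restrict to automorphisms of $G(r,p,n)$, and that each permutes the reflections of $G(r,p,n)$ — and then assembling them. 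Edge cases $r=1$ (where $G(r,p,n)=S_n$ is real and already covered) and small $n$ (where the reflection-set identification needs care) should be dispatched separately. I would expect the proof to conclude by stating that the composite automorphism sends $g$ to $h$ up to conjugacy, hence $\ell_T(g)=\ell_T(h)$, and therefore $\ell_T$ is constant on each rational conjugacy class of $G(r,p,n)$.
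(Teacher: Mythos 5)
Your proposal is correct and follows essentially the same route as the paper: invoke Lemma~\ref{le:auto} to get a Galois automorphism carrying $g$ to a $G(r,1,n)$-conjugate of $g^d$, then compose with an inner automorphism of $G(r,1,n)$ (which restricts to $G(r,p,n)$ by normality and visibly permutes reflections) to land on $h$, concluding that the composite reflection-preserving automorphism forces $\ell_T(g)=\ell_T(h)$. Your extra remarks on class splitting and on identifying the reflections of $G(r,p,n)$ with those of $G(r,1,n)$ lying in the subgroup are sound elaborations of points the paper leaves implicit.
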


\begin{proof}
  Let $g$ and $h$ be rationally conjugate elements of $G(r,p,n)$, so
  $h$ is a $G(r,p,n)$-conjugate of $g^d$ for some $d$ relatively prime
  to the order of $g$.
  By Lemma~\ref{le:auto}, there exists a Galois automorphism $\alpha_x$
  of $G(r,1,n)$ such that 
  $\alpha_x(g)$ is a $G(r,1,n)$-conjugate of $g^d$, and hence also of $h$.  Let 
  $\beta$ be an inner automorphism of $G(r,1,n)$ such that
  $\beta(\alpha_x(g))=h$.
  Since $G(r,p,n)$ is a normal subgroup of $G(r,1,n)$, the map $\beta$ 
  preserves $G(r,p,n)$.
  Now $\beta\circ\alpha_x$ is a reflection-preserving automorphism of
  $G(r,p,n)$ that maps $g$ to $h$, and so $g$ and $h$ must have the same reflection length.
\end{proof}

Analysis of reflection-preserving automorphisms is more subtle for the
exceptional complex reflection groups, and that in~\cite{Marin-Michel10}
appears to involve computer calculations.  So for the groups $G_4$-$G_{37}$, we instead compute absolute reflection length and rational conjugacy classes
using code\footnote{Code is posted in the repository \url{http://github.com/fostergreenwood/spectra}.}\addtocounter{footnote}{-1}\addtocounter{Hfootnote}{-1} for {\sc GAP3}~\cite{gap3} and observe:

\begin{lemma} 
\label{le:TlengthconstKrprim}
For the irreducible exceptional complex reflection groups, absolute reflection
length $\ell_T$ is constant on rational conjugacy classes.
\end{lemma}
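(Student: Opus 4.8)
The plan is to verify Lemma~\ref{le:TlengthconstKrprim} by direct computation in GAP3 using the CHEVIE package, exploiting the fact that all $34$ exceptional groups $G_4,\dots,G_{37}$ are finite and explicitly available. First I would, for each exceptional group $W$, obtain the list of all reflections $T$ from the standard generators and the built-in matrix (or permutation) models in CHEVIE. With $T$ in hand, the reflection length $\ell_T(w)$ of an element $w$ can be computed as a breadth-first search in the Cayley graph $\Gamma(W,T)$ starting at the identity: layer $k$ consists of all products of $k$ reflections not already reached, and $\ell_T(w)$ is the index of the layer containing $w$. Since $\ell_T(w)\le n\le 8$ for these groups (reflection length is bounded by the dimension of the reflection representation for any complex reflection group), only a bounded number of layers is needed, though for the largest groups (notably $G_{37}=E_8$ with $|W|\approx 7\times 10^8$) this must be done with care for memory, working class-representative by class-representative rather than enumerating all group elements.

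Next I would enumerate the rational conjugacy classes. Starting from the ordinary conjugacy classes (available via \texttt{ConjugacyClasses} in GAP3), I would merge two classes $\mathcal{C}_g$ and $\mathcal{C}_h$ whenever $h$ is conjugate to some power $g^d$ with $\gcd(d,o(g))=1$; equivalently, for each class representative $g$ of order $k$, I would compute the classes of $g^d$ for $d\in U_k$ and take the union of the resulting class set. This partitions the set of conjugacy classes into rational classes. Then, for one representative $w$ from each \emph{ordinary} conjugacy class, I would record the pair $(\text{rational class label}(w),\ \ell_T(w))$, and finally check that $\ell_T$ takes a single value on each rational class. Since $\ell_T$ is already known to be constant on ordinary conjugacy classes (inner automorphisms permute $T$), it suffices to test one representative per ordinary class, which keeps the computation feasible even for $G_{37}$.

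The main obstacle is purely computational: handling the largest exceptional groups, especially $G_{34}$ (order $\approx 3.9\times 10^7$), $G_{36}=E_7$, and $G_{37}=E_8$, where naive enumeration of group elements for the breadth-first reflection-length computation is infeasible. The workaround is to compute $\ell_T$ only for the (relatively few) conjugacy class representatives, pruning the BFS so that at layer $k$ one only stores enough of the frontier to decide membership of the target representatives, and to use the permutation representation on roots rather than matrices wherever CHEVIE provides it, since permutation arithmetic is far cheaper. For $E_6$, $E_7$, $E_8$ one may further shortcut using the classical fact that $\ell_T=\codim$ for real reflection groups together with Carter's description of conjugacy classes by Carter diagrams, but the uniform BFS-on-class-representatives approach, combined with the rational-class merging above, suffices for all of $G_4,\dots,G_{37}$. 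Running the posted GAP3 code on each of the $34$ groups and observing that no rational class carries two distinct values of $\ell_T$ completes the verification.
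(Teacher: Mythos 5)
Your proposal takes essentially the same route as the paper: the paper's ``proof'' of Lemma~\ref{le:TlengthconstKrprim} is precisely a GAP3 computation of $\ell_T$ and of the rational conjugacy classes for $G_4,\dots,G_{37}$, checking constancy on each rational class, just as you describe. One small caveat: your claim that $\ell_T(w)\le n$ for any complex reflection group is false in general (e.g., in $G(3,3,3)$ the central element $\zeta I$ has $\ell_T=4>3$, since every reflection has determinant $-1$), but this only affects your a priori bound on the number of BFS layers, not the termination or correctness of the verification.
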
 

Our main result now follows.

\begin{theorem} Let $W$ be an irreducible complex reflection group, and let $T$
	be the set of all reflections in $W$.  
	Then the Cayley graph $\Gamma(W,T)$ is distance integral.
	\label{th:distintegrality}
\end{theorem}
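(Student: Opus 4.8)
The plan is to combine the sufficient condition for integrality (Theorem~\ref{th:suffcondforintegrality}) with the structural facts assembled in this section. Recall that the distance matrix $D_\Gamma$ equals $M_{\ell_T}$, the matrix with $(g,h)$-entry $\ell_T(gh^{-1})$, where $\ell_T$ is the reflection length function. The function $\ell_T$ is integer-valued by definition, so by Theorem~\ref{th:suffcondforintegrality} it suffices to verify that $\ell_T$ is constant on rational conjugacy classes of $W$. Once this is known, the characteristic polynomial of $D_\Gamma$ is monic with integer coefficients and has only rational roots, hence only integer roots, which is exactly distance integrality.

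To verify that $\ell_T$ is constant on rational conjugacy classes, I would invoke the classification of irreducible complex reflection groups recalled in Section~\ref{sec:refgroups}: every such $W$ is either one of the monomial groups $G(r,p,n)$ (including the symmetric groups $G(1,1,n)=G(1,1,n)$ and the cyclic groups $G(r,p,1)$, which are special cases of the $G(r,p,n)$ family), or one of the $34$ exceptional groups $G_4,\dots,G_{37}$. For the infinite family, Lemma~\ref{le:TlengthconstKrimprim} establishes exactly what is needed: using the Galois automorphism $\alpha_x$ from Lemma~\ref{le:auto} composed with a suitable inner automorphism that preserves the normal subgroup $G(r,p,n)\trianglelefteq G(r,1,n)$, any pair of rationally conjugate elements is related by a reflection-preserving automorphism, and such an automorphism preserves $\ell_T$. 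For the exceptional groups, Lemma~\ref{le:TlengthconstKrprim} records the same conclusion, verified by direct computation in GAP3. Thus in every case $\ell_T$ is constant on rational conjugacy classes.

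I would therefore write the proof in essentially three sentences: first cite that $D_\Gamma=(\ell_T(gh^{-1}))_{g,h\in W}$ with $\ell_T$ integer-valued; second, split into the two cases of the Shephard--Todd classification and apply Lemma~\ref{le:TlengthconstKrimprim} to the monomial groups and Lemma~\ref{le:TlengthconstKrprim} to the exceptional groups to conclude $\ell_T$ is constant on rational conjugacy classes; third, apply Theorem~\ref{th:suffcondforintegrality} to deduce that $D_\Gamma$ has integral spectrum, i.e., $\Gamma(W,T)$ is distance integral. One can also remark in passing that, combined with Corollary~\ref{cor:adjcodimintegrality}, this completes the proof that all three matrices $A_\Gamma$, $D_\Gamma$, and $C_\Gamma$ have integral spectra.

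The genuine mathematical content is entirely contained in the two lemmas already proved: the real obstacle was establishing that $\ell_T$ (not just $\codim$) is constant on rational conjugacy classes when $\ell_T\neq\codim$, which for the monomial groups required producing the reflection-preserving Galois automorphisms explicitly via the Chinese Remainder Theorem argument in Lemma~\ref{le:auto}, and for the exceptionals required a case-by-case computer check. Given those, the theorem itself is a short formal deduction, so I do not anticipate any difficulty in assembling the final argument; the only care needed is to make sure the case division covers every irreducible complex reflection group, which it does since the symmetric and cyclic groups are subsumed under the $G(r,p,n)$ notation.
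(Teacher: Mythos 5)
Your proposal is correct and follows the paper's own proof exactly: the paper also deduces the theorem as an immediate consequence of Theorem~\ref{th:suffcondforintegrality} combined with Lemma~\ref{le:TlengthconstKrimprim} for the groups $G(r,p,n)$ and Lemma~\ref{le:TlengthconstKrprim} for the exceptional groups. The only addition in the paper is a remark that integrality for the exceptional groups was also verified independently by computing the eigenvalues in GAP3 via Equation~(\ref{eq:distspec}).
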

\begin{proof}
	By Theorem~\ref{th:suffcondforintegrality}, this is an immediate corollary of Lemma~\ref{le:TlengthconstKrimprim} and
	Lemma~\ref{le:TlengthconstKrprim}.
	For the exceptional reflection groups, we additionally verified integrality
	by inspection of eigenvalues calculated\footnotemark\ 
	 in GAP3~\cite{gap3} using Equation~(\ref{eq:distspec}).	
\end{proof}

\noindent	
Note that if Lemma~\ref{le:TlengthconstKrprim} could be proven by other than computational means, it would complete a purely non-computational proof of Theorem~\ref{th:distintegrality}.


\section{Combinatorial formula for codimension spectra}
\label{sec:combformula}

By Corollary~\ref{cor:adjcodimintegrality}, the codimension spectrum of a complex reflection group $W$
is integral.
In this section, we provide a combinatorial formula that can be used to 
compute the codimension spectra for the reflection groups $G(r,1,n)$ without using character values, 
thus supplying an alternative approach to their integrality.
This will involve reviewing the use of symmetric functions to describe roots of Poincar\'{e} polynomials.
\\

\headingstyle{Poincar\'{e} Polynomials}\\
Let $W$ be a complex reflection group acting on $V \simeq \CC^n$ by a reflection representation. 
Given an irreducible character $\chi$ of $W$, define the Poincar\'{e} polynomials
\[R_\chi(t)=\frac{1}{\chi(1)}\sum\limits_{w\in W} \chi(w)t^{\codim(V^w)}
\qquad \hbox{ and } \qquad
R_\chi^*(t)=\frac{1}{\chi(1)}\sum\limits_{w\in W} \chi(w)t^{\dim(V^w)}, \]
where $V^w$ is the fixed point space of $w$.
Since $\codim V^w=n-\dim V^w$, 
the Poincar\'{e} polynomials $R_{\chi}$ and $R_{\chi}^*$ are
reciprocal polynomials, i.e., satisfy the relations
$$
R_{\chi}^*(t)=t^n R_{\chi}(t^{-1}) \qquad\text{ and }\qquad R_{\chi}(t)=t^n R_{\chi}^*(t^{-1}).
$$
Note that $(t-c)$ is a factor of $R_{\chi}^*(t)$ if and only if $t(t^{-1}-c)=(1-ct)$ is a factor of $R_{\chi}(t)$.
Also, the identity is the unique element with $\dim V^w=n$, so $R_{\chi}^*(t)$
is monic. 
Referring to Equation~\ref{eq:codimspec}, note that the eigenvalues of the codimension matrix can be found as
\begin{equation}
\label{eq:derivative}
\xi_{\chi}=\left.\frac{dR_\chi}{dt}\right|_{t=1}.
\end{equation}
For the remainder of this section, we focus on the Poincar\'{e} polynomials for
the reflection group $G(r,1,n)$ acting on the vector space $V\cong\CC^n$ by its 
standard monomial representation.  
The characters of $G(r,1,n)$ are indexed by combinatorial objects $\lambda$
(partitions for the symmetric group $S_n=G(1,1,n)$ and $r$-tuples of partitions
in general), so we use the notation $R_{\lambda}$ for the corresponding Poincar\'{e} polynomial.

\begin{example} 
\label{ex:trivchar}
We begin with a classical result in the theory
of reflection groups~\cite{Solomon63}.  If $W$ is a reflection group with basic
invariant polynomials of degrees $d_1,\ldots,d_n$, then  
the Poincar\'{e} polynomial $R_{\text{triv}}(t)$ factors using the 
exponents $m_i=d_i-1$ of the group:
$$
R_{\text{triv}}(t)=\sum_{w\in W}t^{\codim(V^w)}=\prod_{i=1}^{n}(1+m_it).
$$
In particular, when the symmetric group $S_n$ acts on $V\cong\CC^n$ by its permutation representation, the basic invariant polynomials are the elementary symmetric functions
in $n$ variables and have degrees $1,2,\ldots,n$, so
$$
R_{\text{triv}}(t)=\sum_{\sigma\in S_n}t^{\codim(V^{\sigma})}=(1+t)(1+2t)\cdots(1+(n-1)t)
$$
and
$$
R_{\text{triv}}^*(t)=t^nR_{\text{triv}}(t^{-1})=t(t+1)(t+2)\cdots(t+(n-1)).
$$
The roots of $R_{\text{triv}}^*(t)$ are the $-c_{ij}$ for the Young diagram
$$
\begin{array}{|c|c|c|c|c|}
\hline
 0 & 1 & 2 & \cdots & n-1\\
 \hline
\end{array}
$$
associated to the trivial representation, 
where $c_{ij}=j-i$ is the \textit{content} of the box in row $i$ and column $j$ of a Young diagram, recording which diagonal the box is on.
\end{example}

There is a generalization of Example~\ref{ex:trivchar} that factors the Poincar\'{e} polynomial $R_{\lambda}^*(t)$ corresponding to any irreducible character of the symmetric group. 

\begin{theorem}[Poincar\'{e} Polynomials for $S_n$]\label{th:factorPoincarepoly}
If $\chi_{_{\lambda}}$ is the character of the symmetric group $S_n$
corresponding to the partition $\lambda$,
then the Poincar\'{e} polynomial $R_{\lambda}^*(t)$ factors as
$$
R_{\lambda}^*(t)=\prod_{ij\in\lambda}(t+c_{ij}),
$$
where $c_{ij}=j-i$ is the content of the $(i,j)$-box in the Young diagram for $\lambda$.
\end{theorem}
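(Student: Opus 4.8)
The plan is to reduce the statement to an identity for character sums over $S_n$ and then evaluate that sum by passing to symmetric functions. First I would observe that under the permutation representation of $S_n$ on $V\cong\CC^n$ the fixed space $V^w$ is spanned by the orbit sums of $\langle w\rangle$ on $\{1,\dots,n\}$, so $\dim V^w$ equals the number $c(w)$ of cycles of $w$. Writing $f^\lambda=\chi_\lambda(1)$, the assertion $R_\lambda^*(t)=\prod_{ij\in\lambda}(t+c_{ij})$ is then equivalent to the polynomial identity
\[
\sum_{w\in S_n}\chi_\lambda(w)\,t^{c(w)}\;=\;f^\lambda\prod_{ij\in\lambda}(t+c_{ij}).
\]

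Next I would apply the Frobenius characteristic map. Since $w\mapsto t^{c(w)}$ depends only on cycle type, and $\sum_{\mu\vdash n}\frac{t^{\ell(\mu)}}{z_\mu}p_\mu$ is the degree-$n$ component of $\exp\!\bigl(t\sum_{k\ge1}p_k/k\bigr)=\prod_i(1-x_i)^{-t}$, the characteristic of this class function is the degree-$n$ symmetric function $F_n(t)$, whose expansion coefficients are polynomials in $t$ of degree at most $n$. Because the characteristic map sends $\chi_\lambda$ to the Schur function $s_\lambda$ and is an isometry for the Hall inner product, pairing gives $\langle s_\lambda,F_n(t)\rangle=\frac1{n!}\sum_{w\in S_n}\chi_\lambda(w)t^{c(w)}=\frac{f^\lambda}{n!}\,R_\lambda^*(t)$. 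So it remains to show $\langle s_\lambda,F_n(t)\rangle=\frac{f^\lambda}{n!}\prod_{ij\in\lambda}(t+c_{ij})$.

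Both sides of this last equality are polynomials in $t$ of degree at most $n$, so it is enough to verify it at $t=N$ for each positive integer $N$, where everything specializes to something classical. For $t=N$ one has $\prod_i(1-x_i)^{-N}=\prod_i\prod_{j=1}^N(1-x_iy_j)^{-1}$ with $y_1=\dots=y_N=1$, and the Cauchy identity $\prod_{i,j}(1-x_iy_j)^{-1}=\sum_\mu s_\mu(x)s_\mu(y)$ identifies the degree-$n$ part as $\sum_{\mu\vdash n}s_\mu(1^N)\,s_\mu(x)$, so $\langle s_\lambda,F_n(N)\rangle=s_\lambda(1^N)$. The hook--content formula gives $s_\lambda(1^N)=\prod_{ij\in\lambda}\frac{N+c_{ij}}{h_{ij}}$, and with the hook length formula $\prod_{ij\in\lambda}h_{ij}=n!/f^\lambda$ this equals $\frac{f^\lambda}{n!}\prod_{ij\in\lambda}(N+c_{ij})$. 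Two polynomials of degree at most $n$ that agree at infinitely many points are equal, hence $\langle s_\lambda,F_n(t)\rangle=\frac{f^\lambda}{n!}\prod_{ij\in\lambda}(t+c_{ij})$, and cancelling $f^\lambda/n!$ yields $R_\lambda^*(t)=\prod_{ij\in\lambda}(t+c_{ij})$.

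The part needing the most care is the symmetric-function bookkeeping: correctly matching the characteristic of $t^{c(\cdot)}$ with the degree-$n$ piece of $\prod_i(1-x_i)^{-t}$, and then recognizing its pairing with $s_\lambda$ at integer $t$ as the principal specialization $s_\lambda(1^N)$; the hook--content and hook length formulas are standard and may be quoted (e.g.\ from Stanley's \emph{Enumerative Combinatorics} II or Macdonald's book). An alternative that sidesteps the interpolation step uses Jucys--Murphy elements: in $\CC S_n[t]$ one has $\prod_{i=1}^n(t+X_i)=\sum_{w\in S_n}t^{c(w)}w$, the left side is central since its coefficients are the symmetric polynomials $e_k(X_1,\dots,X_n)$, and it acts on the irreducible module $S^\lambda$ by the scalar $\prod_{ij\in\lambda}(t+c_{ij})$ (evaluate on a Gelfand--Tsetlin vector, on which each $X_i$ acts by the content of the box containing $i$); taking the trace then yields the displayed character identity directly. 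I would make the symmetric-function argument the main proof, as it fits the surrounding exposition, and record the Jucys--Murphy route in a remark.
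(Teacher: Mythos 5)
Your proof is correct and complete, and it is essentially the argument the paper itself relies on: the paper offers no proof of this theorem, only a citation to Stanley's exercise 7.50 whose solution goes through Theorem 7.21.2, i.e.\ exactly the route you take (Frobenius characteristic of $t^{c(\cdot)}$, Cauchy identity, the hook--content formula for $s_\lambda(1^N)$, and polynomial interpolation). The only other route the paper gestures at is the Molchanov-style one via the Jacobi--Trudi determinant $s_\lambda=\det(h_{\lambda_i-i+j})$ together with the known factorization of $R_{\mathrm{triv}}^*(t)$, but your version (and your Jucys--Murphy remark) is at least as clean.
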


This theorem is~\cite[exercise 7.50]{Stanley99} and the solution utilizes~\cite[Theorem 7.21.2]{Stanley99}, which is noted to have first been explicitly stated by Stanley in~\cite{Stanley71}.
The theorem also appeared in~\cite{Molchanov82}, which provides a similar factorization for the $R_{\lambda \mu}^*(t)$ for the real reflection group $\{\pm 1\}^n \rtimes S_n$ of type $B_n$, a sum formula for the analogous $R_{\chi}^*(t)$ for the real reflection group of type $D_n$, and a factorization result for the $R_{\chi}^*(t)$ for dihedral groups.
Renteln~\cite{Renteln11} uses Theorem~\ref{th:factorPoincarepoly} and Equation~(\ref{eq:derivative}) to provide a combinatorial formula for the codimension (equivalently distance) spectra in type $A$, leaves similar formulas in types $B$ and $D$ to the reader, and includes the spectra for dihedral type. 
 
We extend these results by showing the Poincar\'{e} polynomials for the groups $G(r,1,n)$ also factor using
contents of Young diagrams.  Example~\ref{ex:codimGr12Gr13} illustrates the theorem for the groups $G(r,1,2)$ and $G(r,1,3)$.

\begin{theorem}[Poincar\'{e} Polynomials for $G(r,1,n)$]
\label{th:codimspec}
If $\chi_{_{\lambda}}$ is the character of $G(r,1,n)$ indexed by the
$r$-tuple $\lambda$ of partitions $\lambda(0),\ldots,\lambda(r-1)$,
then the Poincar\'{e} polynomial $R_{\lambda}^*(t)$ factors as
$$
R_{\lambda}^*(t)=\prod_{ij\in\lambda(0)}(t+r-1+rc_{ij})\prod_{ij\in\lambda(1)}(t-1+rc_{ij})\cdots\prod_{ij\in\lambda(r-1)}(t-1+rc_{ij}),
$$
where $c_{ij}=j-i$ is the content of the $(i,j)$-box in a Young diagram.
\end{theorem}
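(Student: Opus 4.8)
The plan is to run the proof of Theorem~\ref{th:factorPoincarepoly} one level up, replacing the symmetric-function combinatorics of $S_n$ by that of the wreath product $G(r,1,n)=\ZZ_r\wr S_n$, and then to specialize so that only the cycles of cycle-sum $0$ are recorded by $t$. First I would compute $\dim V^{w}$ combinatorially. Conjugate $w$ to a direct sum of blocks $\delta_{i}\sigma_{i}$ as in Section~\ref{sec:refgroups}; a block whose cycle has length $k$ and cycle-sum $c$ has minimal polynomial $x^{k}-\zeta_{r}^{c}$ (it has $e_{1}$ as a cyclic vector), which has simple roots, so the block fixes a nonzero vector precisely when $c\equiv 0\bmod r$, and then a one-dimensional space. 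Writing the conjugacy datum of $w$ as the $r$-tuple $\mu=(\mu^{(0)},\dots,\mu^{(r-1)})$ in which $\mu^{(c)}$ lists the lengths of the cycles of cycle-sum $c$, this gives $\dim V^{w}=\ell(\mu^{(0)})$, the number of parts of $\mu^{(0)}$. Summing $t^{\dim V^{w}}\chi_{_{\lambda}}(w)$ over conjugacy classes, with $|G(r,1,n)|=r^{n}n!$ and centralizer orders $z^{(r)}_{\mu}=\prod_{c}r^{\ell(\mu^{(c)})}z_{\mu^{(c)}}$, yields
\[
R_{\lambda}^{*}(t)=\frac{r^{n}n!}{\chi_{_{\lambda}}(1)}\sum_{\mu}\frac{\chi_{_{\lambda}}(\mu)}{z^{(r)}_{\mu}}\,t^{\,\ell(\mu^{(0)})}.
\]

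Next I would invoke the characteristic map for $\ZZ_{r}\wr S_{n}$ (Macdonald, \textit{Symmetric Functions and Hall Polynomials}, Ch.~I, App.~B): with one alphabet $X^{(j)}$ for each character $\gamma_{j}$ of $\ZZ_{r}$, the irreducible character indexed by $\lambda=(\lambda(0),\dots,\lambda(r-1))$ has characteristic $s_{\lambda}:=\prod_{j}s_{\lambda(j)}(X^{(j)})$, and the wreath analog of the power-sum expansion of Schur functions is $s_{\lambda}=\sum_{\mu}\frac{\chi_{_{\lambda}}(\mu)}{z^{(r)}_{\mu}}\,p_{\mu}$, where $p_{\mu}=\prod_{c}\prod_{i}P_{\mu^{(c)}_{i}}(c)$ is a product of the colored power sums $P_{k}(c)=\sum_{j}\overline{\gamma_{j}(c)}\,p_{k}(X^{(j)})$. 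Comparing with the display above, the idea is to apply the $\QQ[t]$-algebra homomorphism $\varphi$ fixed on the free generators by $\varphi(P_{k}(0))=t$ and $\varphi(P_{k}(c))=1$ for $c\neq 0$ (all $k\ge 1$): then $\varphi(p_{\mu})=t^{\ell(\mu^{(0)})}$, so $R_{\lambda}^{*}(t)=\ds\frac{r^{n}n!}{\chi_{_{\lambda}}(1)}\,\varphi(s_{\lambda})$. Fourier inversion over $\ZZ_{r}$ then rewrites $\varphi$ on each alphabet: since $\sum_{c\neq 0}\zeta_{r}^{jc}$ equals $r-1$ when $j=0$ and $-1$ when $j\neq 0$, every power sum $p_{k}(X^{(j)})$ is sent to the single scalar $a_{0}=(t+r-1)/r$ for $j=0$, and $a_{j}=(t-1)/r$ for $j\neq 0$.

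It then remains to evaluate $\varphi(s_{\lambda})$ and cancel constants. Because $\varphi$ collapses every power sum in the $j$-th alphabet to $a_{j}$, Stanley's hook-content specialization $s_{\nu}\mapsto\prod_{ij\in\nu}(x+c_{ij})/h_{ij}$ (the $p_{k}\mapsto x$ evaluation; cf.\ \cite[Cor.~7.21.4]{Stanley99}, the ingredient used for Theorem~\ref{th:factorPoincarepoly}) gives
\[
\varphi(s_{\lambda})=\prod_{j=0}^{r-1}\prod_{ij\in\lambda(j)}\frac{a_{j}+c_{ij}}{h_{ij}}=\frac{1}{r^{n}H(\lambda)}\prod_{ij\in\lambda(0)}(t+r-1+rc_{ij})\prod_{j=1}^{r-1}\prod_{ij\in\lambda(j)}(t-1+rc_{ij}),
\]
where $h_{ij}$ is the hook length of the $(i,j)$-box, $H(\lambda)$ is the product of all hook lengths over all components, and $r^{n}$ arises since $\sum_{j}|\lambda(j)|=n$. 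Finally, the hook-length formula for wreath products, $\chi_{_{\lambda}}(1)=\binom{n}{|\lambda(0)|,\dots,|\lambda(r-1)|}\prod_{j}f^{\lambda(j)}=n!/H(\lambda)$, makes the prefactor $r^{n}n!/\bigl(\chi_{_{\lambda}}(1)\,r^{n}H(\lambda)\bigr)$ equal to $1$, which is precisely the asserted factorization.

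The routine pieces are the eigenvalue count for $\dim V^{w}$ and the Fourier inversion; the one point needing genuine care is pinning down the wreath-product bookkeeping---which alphabet $X^{(j)}$ is dual to which character of $\ZZ_{r}$, hence which component $\lambda(j)$ carries the $(t+r-1+rc_{ij})$ factors rather than the $(t-1+rc_{ij})$ ones. I would settle this by testing the trivial character $\lambda=((n),\varnothing,\dots,\varnothing)$: Solomon's theorem (Example~\ref{ex:trivchar}) with the degrees $r,2r,\dots,nr$ of $G(r,1,n)$ gives $R_{\text{triv}}^{*}(t)=\prod_{k=1}^{n}(t+rk-1)=\prod_{ij\in(n)}(t+r-1+rc_{ij})$, confirming that $\lambda(0)$ is the component dual to the trivial character of $\ZZ_{r}$, as in the statement. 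As a further check, at $r=1$ the formula collapses to $\prod_{ij\in\lambda}(t+c_{ij})$, recovering Theorem~\ref{th:factorPoincarepoly}.
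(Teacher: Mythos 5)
Your proposal is correct and follows essentially the same route as the paper: the characteristic map for $\ZZ_r\wr S_n$ with one alphabet per character of $\ZZ_r$, the specialization sending the power sums of cycle-product $1$ to $t$ and the rest to $1$, the Fourier inversion producing the scalars $(t+r-1)/r$ and $(t-1)/r$, and the content factorization for single partitions. The only cosmetic differences are that you track the leading constant explicitly via hook lengths and the wreath-product dimension formula, where the paper simply invokes monicity of $R_\lambda^*(t)$ to normalize, and you apply the hook-content specialization directly rather than routing through Theorem~\ref{th:factorPoincarepoly} for each component.
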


\headingstyle{Symmetric Functions and Identities}\\
Before proving Theorem~\ref{th:codimspec}, we review the cast of symmetric functions in countably many variables
$\{x_i\}_{i=1}^{\infty}$ and relevant identities.

The \textit{power sum symmetric functions} $p_d$ are defined by the generating function
$$
P(t)=p_1+p_2t+p_3t^2+\cdots = \sum_{i\geq1}\frac{x_i}{1-x_it},
$$
so $p_d$ is the sum of all monomials of the form $x_i^d$.
The \textit{complete homogeneous symmetric functions} $h_d$ are defined by the generating function
$$
H(t)=h_0+h_1t+h_2t^2+\cdots = \prod_{i\geq1}\frac{1}{1-x_it},
$$
so $h_d$ is the sum of all monomials in the variables $x_i$ with total degree $d$.
 
Recall that the conjugacy class of an element in the symmetric group is
determined by the orders of the cycles in the element's disjoint cycle
decomposition.  Conjugacy class types can be catalogued by partitions, which can be given as Young diagrams, and 
also by monomials in the symmetric functions $p_{_{d}}$.  
The \textit{conjugacy class indicator monomial} $p_{_{\sigma}}$ records the conjugacy class type of a permutation $\sigma$ as a product of power sums $p_{_d}$, where the multiplicity of $p_{_{d}}$ is the number of cycles of order $d$ in the disjoint cycle decomposition of $\sigma$.

\begin{example}
\label{ex:key}
The following table shows the Young diagrams and indicator monomials for the five conjugacy classes of 
the symmetric group $S_4$.
$$
{\renewcommand{\arraystretch}{1.5}
\begin{array}{cp{1cm}ccccc}
S_4 & &(1)(2)(3)(4) & (12)(3)(4) & (12)(34) & (123)(4) & (1234) \\
\hline
\rm{diagram} & & \sub\yng(1,1,1,1) & \sub\yng(2,1,1) & \sub\yng(2,2) & \sub\yng(3,1) & \sub\yng(4) \\
\rm{monomial}   &   & p_1^4           & p_2p_1^2      & p_2^2     & p_3p_1    & p_4 \\
\hline
\end{array}}
$$
\end{example}

The \textit{Schur function} $s_{\lambda}$ associated to a partition $\lambda$ is 
defined using determinants of alternant matrices in the variables $x_i$;
however, it is the following identities (see~\cite[p.~41,~114]{Macdonald98}),
expressing $s_{\lambda}$ in terms of the power sum and
complete homogeneous symmetric functions, that are relevant for our purposes:
\begin{equation}
\label{eq:complete}
s_{_{\lambda}}=\det(h_{_{\lambda_i-i+j}}) 
\end{equation}
\begin{equation}
\label{eq:averaging}
s_{_{\lambda}}=\frac{1}{n!}\sum_{\sigma\in S_n}\chi_{_{\lambda}} (\sigma)p_{_{\sigma}}. 
\end{equation}
As a special case (see~\cite[p.~25]{Macdonald98}), if $\lambda$ is the partition consisting of
a single row of $d$ boxes, then the character $\chi_{_{\lambda}}$ is the trivial character 
for the symmetric group on $d$ letters, and
$$
h_d=s_d=\frac{1}{d!}\sum_{\sigma\in S_d}p_{\sigma}.
$$
Thus the entries of the matrix $(h_{_{\lambda_i-i+j}})$ may also be viewed as functions of the power sums $p_d$.

\begin{remark}
It is typical to work with the symmetric functions in an infinite number
of variables so that the power sum symmetric functions $p_{d}$ are algebraically independent over $\QQ$.
See the discussion in~\cite[Section~I.2]{Macdonald98}.
Consequently we can view $p_1,p_2,\ldots$ as indeterminates.
Further, any identity that can be expressed in terms of the power sum symmetric functions
also holds for any set of countably many indeterminates over $\QQ$.  
\end{remark}

\headingstyle{Substitution}\\
Since the dimension of the fixed point space of an element
of the symmetric group is the number of cycles in the element's disjoint cycle decomposition, the substitution
$$
p_{_d}=t\quad\text{for each $d\geq1$}
$$ 
transforms the conjugacy class indicator monomials into $p_{_{\sigma}}(t)=t^{\dim V^{\sigma}}$,
and, in turn,
$$
\det(h_{_{\lambda_i-i+j}}(t))
=s_{_{\lambda}}(t)=
\frac{1}{n!}\sum_{\sigma\in S_n}\chi_{_{\lambda}}(\sigma)p_{_{\sigma}}(t).
$$
Identity~(\ref{eq:averaging}) establishes that the Poincar\'{e}
polynomial $R_{\lambda}^*(t)$ has the same roots as the Schur function $s_{_{\lambda}}(t)$.
Indeed, after substitution, the Schur function is
$$
s_{_{\lambda}}(t)
=\frac{1}{n!}\sum_{\sigma\in S_n}\chi_{_{\lambda}}(\sigma)t^{\dim V^{\sigma}}
=\frac{\chi_{_{\lambda}}(1)}{n!}R_{\lambda}^*(t).
$$
Thus, the Schur function $s_{_{\lambda}}(t)$ and the Poincar\'{e} polynomial $R_{\lambda}^*(t)$
are the same up to multiplication by a nonzero scalar:
\begin{equation}\label{eq:sameroots}
s_{_{\lambda}}(t)\doteq R_{\lambda}^*(t).
\end{equation}
It is then possible to use Identity~(\ref{eq:complete}), along with
properties of determinants and the factorization of $R_{\text{triv}}^*(t)$ from Example~\ref{ex:trivchar},
to determine the roots of $s_{_{\lambda}}(t)$ and $R_{\lambda}^*(t)$ (see~\cite{Molchanov82}). 
\\

\headingstyle{More Symmetric Functions and Identities}\\
For the groups $G(r,1,n)$, we will show that the roots of the Poincar\'{e} polynomials 
coincide with the roots of generalized Schur functions defined using multiple sets of 
symmetric functions.

Let $C_r$ be the cyclic group of order $r$ generated by $\zeta_r=e^{2\pi i/r}$, and let
$\widehat{C_r}$ be the set of irreducible characters $\gamma_0,\ldots,\gamma_{r-1}$ of $C_r$, where $\gamma_k(\zeta_r)=\zeta_r^k$.
Let $\{p_{d,c}\mid d\geq1, c\in C_r\}$ be a set of algebraically independent indeterminates over $\CC$.
For each $c$ in $C_r$, the set $\{p_{d,c}\mid d\geq1\}$ may be viewed as a set of
power sum symmetric functions (in a sequence of underlying variables $x_{i,c}$).
In turn, we have corresponding complete homogeneous symmetric functions $h_{_{d,c}}$
and Schur functions $s_{_{\lambda,c}}$.

For each degree $d\geq 1$ and each character $\gamma_{k}$ in $\widehat{C_r}$, make a change of
variables and define
$$
p_{_{d,k}}=\frac{1}{r}\sum_{c\in C_r}\gamma_k(c)p_{_{d,c}}.
$$
Thus if $X$ is the character table of $C_r$, then, for each degree $d$, the change of basis matrix from
$\{p_{_{d,c}}:c\in C_r\}$ to $\{p_{_{d,k}}:\gamma_k\in\widehat{C_r}\}$ is $\frac1r X$.
For each $\gamma_k$, the elements $p_{_{d,k}}$ are algebraically independent
and may thus be viewed as another set of power sum symmetric functions 
(in another set of underlying variables $x_{i,k}$).  In turn, we have
the related sets of complete homogeneous symmetric functions $h_{_{d,k}}$
and Schur functions $s_{_{\lambda,k}}$.


The conjugacy class types in the group $G(r,1,n)$ are catalogued by $r$-tuples of partitions and also by monomials
in the power sums $p_{_{d,c}}$.  
The \textit{conjugacy class indicator monomial}
$P_{\rho}$ records the conjugacy class type $\rho$ as a product of
indeterminates $p_{_{d,c}}$, where the multiplicity of $p_{_{d,c}}$ is the number
of blocks $\delta_i\sigma_i$ with cycle-size $d$ and cycle-product $c$.

\begin{example} 
\label{ex:Prho}  
Conjugacy class types in the group $G(3,1,14)$ are indexed by triples of partitions whose
sizes add up to $14$.  For instance, let $\zeta=e^{2\pi i/3}$, and
let $\rho=(\rho(1),\rho(\zeta),\rho(\zeta^2))$ be the triple of partitions
$$
\rho(1)=\yng(3,2), \phantom{woo}
\rho(\zeta)=\yng(2,1,1,1), \phantom{woo}
\rho(\zeta^2)=\yng(4).
$$
The lengths of the rows in the partition $\rho(c)$ correspond to the cycle-sizes of the blocks $\delta_i\sigma_i$
with cycle-product $c$.
Thus, the elements in the conjugacy class of type $\rho$ have
\begin{itemize}
 \item a $3$-cycle and $2$-cycle with cycle-product $1$,
 \item a $2$-cycle and three $1$-cycles with cycle-product $\zeta$, and
 \item a $4$-cycle with cycle-product $\zeta^2$.
\end{itemize}
The corresponding conjugacy class indicator monomial is
$
P_{\rho}=p_{3,1}^{}p_{2,1}^{} p_{2,\zeta}^{}p_{1,\zeta}^{3} p_{4,\zeta^2}^{}.
$
\end{example}

Finally, each character of the group $G(r,1,n)$ is indexed by 
an $r$-tuple $\lambda$ of partitions $\lambda(0),\ldots,\lambda(r-1)$ whose
sizes add up to $n$.
The generalized Schur function corresponding to $\lambda$ is the polynomial
$$
S_{\lambda}=\prod_{\gamma_k\in\widehat{C_r}} s_{_{\lambda(k),k}}.
$$
Parallel to Identity~(\ref{eq:averaging}), the following identity relates the generalized Schur function and
the conjugacy class indicator monomials for $G=G(r,1,n)$ (see~\cite[Appendix B.9]{Macdonald98}):
$$
S_{\lambda}=
\dfrac{1}{|G|}\sum_{g\in G}\chi_{_{\lambda}}(g)P_{_g}.
$$
We now prove Theorem~\ref{th:codimspec} for the groups $G(r,1,n)$
using a substitution analogous to that discussed previously in the case of the symmetric group. 

\begin{proof}[Proof of Theorem~\ref{th:codimspec}]
The dimension of the fixed point space of an element
of $G=G(r,1,n)$ is the number of cycles with cycle-product one, so the
substitution
$$
p_{_{d,c}}=\left\{\begin{array}{ll}
t & \text{if $c=1$} \\
1 & \text{else}
\end{array}\right.
$$
transforms the conjugacy class indicator monomials into $P_{_{g}}(t)=t^{\dim V^g}$,
and, in turn,
$$
S_{\lambda}(t)=\frac{1}{|G|}\sum_{g\in G}\chi_{_{\lambda}}(g)t^{\dim V^g}
=\frac{\chi_{_{\lambda}}(1)}{|G|}R_{\lambda}^*(t).
$$
Thus the generalized Schur polynomial and the Poincar\'{e} polynomial have
the same roots.  

We examine the effect of the substitution on the
generalized Schur polynomial.
Since the character table of the cyclic group relates the 
variables $p_{_{d,c}}$ and $p_{_{d,k}}$, we have, after substitution,
$$
p_{_{d,0}}(t)=\frac{t+r-1}{r},\quad
h_{_{d,0}}(t)=h_{_d}\left(\frac{t+r-1}{r}\right), \quad \text{ and }
s_{_{\lambda(0),0}}(t)=s_{_{\lambda(0)}}\left(\frac{t+r-1}{r}\right).
$$
And for $k\neq0$, we have
$$
p_{_{d,k}}(t)=\frac{t-1}{r},\quad
h_{_{d,k}}(t)=h_{_d}\left(\frac{t-1}{r}\right), \quad \text{ and }
s_{_{\lambda(k),k}}(t)=s_{_{\lambda(k)}}\left(\frac{t-1}{r}\right).
$$
Then the generalized Schur polynomial becomes
$$
S_{_{\lambda}}(t)=s_{_{\lambda(0)}}\left(\frac{t+r-1}{r}\right)
s_{_{\lambda(1)}}\left(\frac{t-1}{r}\right)\cdots 
s_{_{\lambda(r-1)}}\left(\frac{t-1}{r}\right),
$$
and by Equation~\ref{eq:sameroots} (relating Schur functions and Poincar\'{e} polynomials for the symmetric group),
$$
R_{_{\lambda}}^*(t)\doteq
R_{_{\lambda(0)}}^*\left(\frac{t+r-1}{r}\right)
R_{_{\lambda(1)}}^*\left(\frac{t-1}{r}\right)\cdots
R_{_{\lambda(r-1)}}^*\left(\frac{t-1}{r}\right).
$$
Finally, we use Theorem~\ref{th:factorPoincarepoly} for the symmetric group to factor each $R_{_{\lambda(k)}}^*$.  
Note that $R_{\lambda}^*(t)$ is monic, so clearing leading coefficients 
completes the factorization formula for $R_{\lambda}^{*}(t)$.
\end{proof}

\begin{remark}When $p\neq1$, the Poincar\'{e} polynomials $R_{\chi}^*(t)$ for $G(r,p,n)$ do not
always have all integer roots.  This is perhaps not too surprising since in the case $r=2$, for example,
Molchanov~\cite{Molchanov82} expresses the Poincar\'{e} polynomials of
the groups of type $D_n$ as averages of those of the groups of type $B_n$.
We suspect consideration of the Clifford theory 
relating characters of $G(r,p,n)$ to those of $G(r,1,n)$ might allow for generally expressing
the Poincar\'{e} polynomials of $G(r,p,n)$ as averages of those
of $G(r,1,n)$.  However, we did not pursue this further.
\end{remark}


\begin{example}\label{ex:codimGr12Gr13} As an example, we compute the distance, or equivalently codimension, spectra for $G(r,1,2)$ and $G(r,1,3)$, whose characters are indexed by the $r$-tuples $\lambda=(\lambda(0),\lambda(1),\ldots,\lambda(r-1))$  of partitions whose sizes add up to $2$ and $3$ respectively. 
The form of $\lambda$ and its contents are indicated in the first two columns
of Tables~\ref{distspecG(r,1,2)},~\ref{nonzerodistspecG(r,1,3)},
and~\ref{zerodistspecG(r,1,3)}.
Based on the formula in Theorem~\ref{th:codimspec},
the ordering of the partitions $\lambda(1),\ldots,\lambda(r-1)$ does not
affect $R^*_{\lambda}(t)$, so a row in one of the tables accounts for the set
all tuples differing only by a reordering of the partitions in
slots $1,\ldots,r-1$.
Given $\lambda$, it is straightforward to compute the Poincar\'{e} polynomial $R_{\lambda}^{*}(t)$ using Theorem~\ref{th:codimspec}, shift to $R_\lambda(t)=t^nR_{\lambda}^{*}(t^{-1})$, and compute the spectrum via $\xi_{\lambda}=R'_{\lambda}(1)$.
The multiplicity of each $\xi_{\lambda}$ appearing in a row of one of the tables is given in the final column as 
\[\mathrm{Mult}(\xi_{\lambda})=\chi_{\lambda}(1)^2(\hbox{\# of possible $r$-tuples of the form $\lambda$}).\]
Totaling the multiplicities for common values of $\xi_{\lambda}$ yields the spectra for $G(r,1,2)$ and $G(r,1,3)$ shown in Table~\ref{ta:distspecGr12Gr13}.


\begin{center}
\begin{table}[h]
\caption{Distance, equivalently codimension, spectrum of $\Gamma(G(r,1,2),T)$ and $\Gamma(G(r,1,3),T)$}
\label{ta:distspecGr12Gr13}
\begin{tabular}{rrlrrl}
$G(r,1,2)$ & $\eta_{\lambda}=\xi_{\lambda}^{}$ & $\mathrm{Multiplicity}$ & $G(r,1,3)$ & $\eta_{\lambda}=\xi_{\lambda}^{}$ & $\mathrm{Multiplicity}$ \\
\hline
& $4r^2-3r$ & $1$ && $18r^3-11r^2$ & $1$ \\
& $r$ & $r-1$ && $r^2$ & $13r-12$ \\
& $0$ & $2r^2-6r+4$ && $0$ & $6r^3-33r+27$ \\
& $-r$ & $5r-4$ && $-r^2$ & $9r-9$ \\
& & && $-2r^2$ & $11r-7$
\end{tabular}
\end{table}
\end{center}

\begin{landscape}

\begin{center}
\begin{table}
\caption{Distance, equivalently codimension, spectrum of $\Gamma(G(r,1,2),T)$} 
\label{distspecG(r,1,2)}
		\[
		{\renewcommand{\arraycolsep}{12pt}\renewcommand{\arraystretch}{2.5}
			\begin{array}{llcccc}
			\lambda(0) & \lambda(i), i \geq 1 & R_{\lambda}^{*}(t) & R_{\lambda}(t) & \xi_{\lambda}^{}=R'_{\lambda}(1) & \mathrm{Mult}(\xi_{\lambda}) \\
			\hline
			\vspace{.25cm}
			\scriptsize{\raisebox{.2em}{\begin{ytableau} 0 & 1 \end{ytableau}}} & & (t+r-1)(t+2r-1) & (1+(r-1)t)(1+(2r-1)t)& 4r^2-3r & 1  \\
			\vspace{.25cm}
			& \scriptsize{\raisebox{.2em}{\begin{ytableau} 0 \\ $-1$ \end{ytableau}}} & (t-1)(t-r-1) & (1-t)(1-(r+1)t)& r & r-1 \\
			\vspace{.25cm}
			& \scriptsize{\raisebox{.2em}{\begin{ytableau} 0 \end{ytableau}}\,,\, \raisebox{.2em}{\begin{ytableau} 0 \end{ytableau}}} & (t-1)^2 & (1-t)^2 & 0 & 4{r-1 \choose 2} \\
			\vspace{.25cm}
			\scriptsize{\raisebox{.2em}{\begin{ytableau} 0 \\ $-1$ \end{ytableau}}} & & (t-1)(t+r-1) & (1-t)(1+(r-1)t) & -r & 1 \\
			\vspace{.25cm}
			\scriptsize{\raisebox{.2em}{\begin{ytableau} 0 \end{ytableau}}} & \scriptsize{\raisebox{.2em}{\begin{ytableau} 0 \end{ytableau}}} & (t-1)(t+r-1) & (1-t)(1+(r-1)t) & -r & 4(r-1) \\
			\vspace{.25cm}
			& \scriptsize{\raisebox{.2em}{\begin{ytableau} 0 & 1 \end{ytableau}}} & (t-1)(t+r-1) & (1-t)(1+(r-1)t) & -r & r-1 \\
			\hline
			\end{array}}
		\]
		\end{table}
\end{center}
		
\begin{center}
\begin{table}
\caption{Nonzero distance, equivalently codimension, spectrum of $\Gamma(G(r,1,3),T)$} 
\label{nonzerodistspecG(r,1,3)}
				\[
		{\renewcommand{\arraycolsep}{12pt}\renewcommand{\arraystretch}{2.5}
			\begin{array}{llcccc}
			\lambda(0) & \lambda(i), i \geq 1 & R_{\lambda}^{*}(t) & \xi_{\lambda}^{}=R'_{\lambda}(1) & \mathrm{Mult}(\xi_{\lambda}) \\
			\hline
			\tiny{\raisebox{.2em}{\begin{ytableau} 0 & 1 & 2 \end{ytableau}}} & & (t+r-1)(t+2r-1)(t+3r-1) & 18r^3-11r^2 & 1 \\
			\tiny{\raisebox{.2em}{\begin{ytableau} 0 \\ $-1$ \\ $-2$ \end{ytableau}}} & & (t-r-1)(t-1)(t+r-1) & r^2 & 1 \\
			\tiny{\raisebox{.2em}{\begin{ytableau} 0 \end{ytableau}}} & \tiny{\raisebox{.2em}{\begin{ytableau} 0 \\ -1 \end{ytableau}}} & (t-r-1)(t-1)(t+r-1) & r^2 & 9(r-1) \\
			& \tiny{\raisebox{.2em}{\begin{ytableau} 0 & 1 \\ -1 \end{ytableau}}} & (t-r-1)(t-1)(t+r-1) & r^2 & 4(r-1) \\
			\tiny{\raisebox{.2em}{\begin{ytableau} 0 \end{ytableau}}} & \tiny{\raisebox{.2em}{\begin{ytableau} 0 & 1 \end{ytableau}}} & (t+r-1)^2(t-1) & -r^2 & 9(r-1) \\
			\tiny{\raisebox{.2em}{\begin{ytableau} 0 & 1 \\ -1 \end{ytableau}}} & & (t-1)(t+r-1)(t+2r-1) & -2r^2 & 4 \\
			\tiny{\raisebox{.2em}{\begin{ytableau} 0 & 1 \end{ytableau}}} & \tiny{\raisebox{.2em}{\begin{ytableau} 0 \end{ytableau}}} & (t-1)(t+r-1)(t+2r-1) & -2r^2 & 9(r-1) \\
			& \tiny{\raisebox{.2em}{\begin{ytableau} 0 & 1 & 2 \end{ytableau}}} & (t-1)(t+r-1)(t+2r-1) & -2r^2 & r-1 \\
			& \tiny{\raisebox{.2em}{\begin{ytableau} 0 \\ $-1$ \\ $-2$ \end{ytableau}}} & (t-2r-1)(t-r-1)(t-1) & -2r^2 & r-1 \\

%
%
				\end{array}}
				\]
\end{table}
\end{center}

\begin{center}
\begin{table}
\caption{Zero distance, equivalently codimension, spectrum of $\Gamma(G(r,1,3),T)$} 
\label{zerodistspecG(r,1,3)}
				\[
		{\renewcommand{\arraycolsep}{12pt}\renewcommand{\arraystretch}{2.5}
			\begin{array}{llcccc}
			\lambda(0) & \lambda(i), i \geq 1 & R_{\lambda}^{*}(t) & R_{\lambda}(t) & \xi_{\lambda}^{}=R'_{\lambda}(1) & \mathrm{Mult}(\xi_{\lambda}) \\
			\hline
				
			& \scriptsize{\raisebox{.2em}{\begin{ytableau} 0 \end{ytableau}}\,,\,\raisebox{.2em}{\begin{ytableau} 0 & 1 \end{ytableau}}} & (t-1)^2(t+r-1) & (1-t)^2(1+(r-1)t) & 0 & 9(r-1)(r-2) \\
			\scriptsize{\raisebox{.2em}{\begin{ytableau} 0 \\ $-1$ \end{ytableau}}} & \scriptsize{\raisebox{.2em}{\begin{ytableau} 0 \end{ytableau}}} & (t-1)^2(t+r-1) & (1-t)^2(1+(r-1)t) & 0 & 9(r-1) \\
			\scriptsize{\raisebox{.2em}{\begin{ytableau} 0 \end{ytableau}}} & \scriptsize{\raisebox{.2em}{\begin{ytableau} 0 \end{ytableau}}\,,\,\raisebox{.2em}{\begin{ytableau} 0 \end{ytableau}}} & (t-1)^2(t+r-1) & (1-t)^2(1+(r-1)t) & 0 & 36{r-1 \choose 2} \\

			& \scriptsize{\raisebox{.2em}{\begin{ytableau} 0 \\ $-1$ \end{ytableau}}\,,\,\raisebox{.2em}{\begin{ytableau} 0 \end{ytableau}}} & (t-1)^2(t-r-1) & (1-t)^2(1-(r+1)t) & 0 & 9(r-1)(r-2) \\

			& \scriptsize{\raisebox{.2em}{\begin{ytableau} 0 \end{ytableau}}\,,\,\raisebox{.2em}{\begin{ytableau} 0 \end{ytableau}}\,,\,\raisebox{.2em}{\begin{ytableau} 0 \end{ytableau}}} & (t-1)^3 & (1-t)^3 & 0 & 36{r-1 \choose 3} \\
			\end{array}}
		\]
		\end{table}
\end{center}
\end{landscape}
\end{example}

\bibliographystyle{abbrv}
\bibliography{DistanceSpectra}

\end{document}